\newtheorem{theorem}{Theorem}[section]
\newtheorem{proposition}[theorem]{Proposition}
\newtheorem{lemma}[theorem]{Lemma}
\newcommand{\G}{\Gamma}
\newcommand{\w}{\omega}
\def\nul{\mathop{\rm nullity}\nolimits}
\def\dist{\mathop{\rm dist }\nolimits}
\begin{document}
\title{Partially metric association schemes with a multiplicity three}

\author{Edwin R. van Dam}
\address{Department of Econometrics and O.R., Tilburg University,
	 the Netherlands}
\email{Edwin.vanDam@uvt.nl}
\author{Jack H. Koolen}
\address{School of Mathematical Sciences,
University of Science and Technology of China and Wu Wen-Tsun Key Laboratory of Mathematics, Chinese Academy of Sciences,
Anhui, P.R. China}
\email{koolen@ustc.edu.cn}
\author{Jongyook Park}
\address{School of Computational Sciences,
Korea Institute for Advanced Study, Seoul, Republic of Korea}
\email{jongyook@hanmail.net}

\subjclass[2010]{05E30, 05C50}

\keywords{association scheme, $2$-walk-regular graph, small multiplicity, distance-regular graph, cover of the cube}

\begin{abstract}
An association scheme is called partially metric if it has a connected relation whose distance-two relation is also a relation of the scheme.
In this paper we determine the symmetric partially metric association schemes with a multiplicity three. Besides the association schemes related to regular complete $4$-partite graphs, we obtain the association schemes related to the Platonic solids, the bipartite double scheme of the dodecahedron, and three association schemes that are related to well-known $2$-arc-transitive covers of the cube: the M\"{o}bius-Kantor graph, the Nauru graph, and the Foster graph F048A. In order to obtain this result, we also determine the symmetric association schemes with a multiplicity three and a connected relation with valency three. Moreover, we construct an infinite family of cubic arc-transitive $2$-walk-regular graphs with an eigenvalue with multiplicity three that give rise to non-commutative association schemes with a symmetric relation of valency three and an eigenvalue with multiplicity three.
\end{abstract}

\maketitle

\section{Introduction}

Bannai and Bannai \cite{BB} showed that the association scheme of the complete graph on four vertices is the only primitive symmetric association scheme with a multiplicity equal to three. They also posed the problem of determining all such imprimitive symmetric association schemes. As many product constructions can give rise to such schemes with a multiplicity three, we suggest and solve a more restricted problem. Indeed, we will determine the symmetric partially metric association schemes with a multiplicity three, where an association scheme is called partially metric if it has a connected relation whose distance-two relation is also a relation of the scheme. Besides the association schemes related to regular complete $4$-partite graphs, we obtain the association schemes related to the Platonic solids, the bipartite double scheme of the dodecahedron, and three association schemes that are related to well-known $2$-arc-transitive covers of the cube: the M\"{o}bius-Kantor graph, the Nauru graph, and the Foster graph F048A. In order to obtain this classification, we also determine the symmetric association schemes with a multiplicity three and a connected relation with valency three and build on work by C\'amara and the authors on $2$-walk-regular graphs \cite{CDKP}.
We furthermore construct an infinite family of cubic arc-transitive $2$-walk-regular graphs with an eigenvalue with multiplicity three that give rise to non-commutative association schemes with a symmetric relation of valency three and an eigenvalue with multiplicity three. The latter indicates that the considered problem is completely different for non-symmetric association schemes and that it may be difficult to classify the cubic $2$-walk-regular graphs with a multiplicity three.

Related work has been done by Yamazaki \cite{Y98}, who showed that if a symmetric association scheme has a connected relation with valency three, then this relation is bipartite or distance-regular. Hirasaka \cite{H00} classified the primitive commutative association schemes with a non-symmetric relation of valency three. Distance-regular graphs with a small multiplicity have also been classified; those with multiplicity three are the graphs of the five Platonic solids and the regular complete $4$-partite graphs. For this and several other results on multiplicities of distance-regular graphs, we refer to \cite[\S~14]{DaKoTa}. See also the expository paper by Bannai \cite{B} on among others the classification problem of association schemes.

This paper is organized as follows: after this introduction, we give definitions and our main tools in Section~\ref{sec:definitions}. In particular, we will use a generalization of Godsil's multiplicity bound \cite[Thm.~1.1]{g88} (Section~\ref{sec:partiallymetricschemes}), a generalization of the concept of a light tail introduced by Juri\v{s}i\'{c}, Terwilliger, and \v{Z}itnik \cite{JTZ10} (Section~\ref{sec:lighttail}), and a lemma by Yamazaki \cite{Y98} (Section~\ref{Yamazaki}). In Section~\ref{sec:uniqueness}, we describe the relevant association schemes and show uniqueness or non-existence of the schemes that occur in Section~\ref{sec:schemesk=m=3} in the proof of the classification result of association schemes with a valency three and a multiplicity three. In Section~\ref{sec:2pmschemes}, we obtain the final classification result of partially metric association schemes with a multiplicity three. Finally, in Section~\ref{sec:cubecovers}, we construct an infinite family of cubic arc-transitive $2$-walk-regular graphs with an eigenvalue with multiplicity three that give rise to non-commutative association schemes.

\section{Definitions and tools}\label{sec:definitions}

In this section we shall introduce notation, concepts, and useful tools that we shall use in the remainder of the paper.

\subsection{Graphs}\label{sec:graphs}
Let $\Gamma$ be a (simple and undirected) graph with vertex set $V$. The {\em distance} $\dist(x,y)$ between two vertices $x,y\in V$ is the length of a shortest
path connecting $x$ and $y$. The maximum distance between
two vertices in $\Gamma$ is the {\em diameter} $D$. We use $\Gamma_i(x)$ for the set of vertices at distance
$i$ from $x$ and write, for the sake of simplicity, $\Gamma(x):=\Gamma_1(x)$. The {\em degree} of $x$ is the number
$|\Gamma(x)|$ of vertices adjacent to it. A graph is {\em regular} with {\em valency} $k$ if the degree of each of its
vertices is $k$.

For a graph $\Gamma$ with diameter $D$, the {\em distance-$i$ graph} $\Gamma_i$ of $\Gamma$ $(1\leq i\leq D)$
is the graph whose vertices are those of $\Gamma$ and whose edges are the pairs of vertices at mutual distance $i$ in
$\Gamma$. In particular, $\Gamma_1=\Gamma$. The {\em distance-$i$ matrix} $B_i$ of $\G$ is the matrix whose rows and columns are indexed by the vertices of $\Gamma$ and the $(x, y)$-entry is $1$ whenever $\dist(x,y)=i$ and $0$ otherwise\footnote{Note that we do not use the notation $A_i$ for the distance-$i$ matrix in order to avoid confusion with the relation matrix of an association scheme; see Section~\ref{sec:associationschemes}}. The {\em adjacency matrix} $A$ of $\G$ equals $B_1$ and the {\em eigenvalues} of the graph $\Gamma$ are those of $A$.  The {\em multiplicity} of an eigenvalue $\theta$ of $\G$ is denoted by $m(\theta)$. Let $\theta_0>\theta_1>\cdots>\theta_r$ be the distinct eigenvalues of $\G$. Then the {\em minimal graph idempotent for $\theta_j$} is defined by $F_j :=\prod_{i\neq j}\frac{A-\theta_i I}{\theta_j-\theta_i}$, i.e., this is the matrix representing the projection onto the eigenspace for $\theta_j$.
The spectral decomposition theorem leads immediately to
\begin{equation}\label{eq: walk regular}
A^{\ell}=\sum_{j=0}^r \theta_j^{\ell}F_j
\end{equation} for every integer $\ell \geq 0$.

\subsection{Walk-regularity}\label{sec:walkregularity}
A connected graph is {\em $t$-walk-regular} if the number of walks of every given length $\ell$ between two vertices $x,y\in V$ only depends on the distance between them, provided that $\dist(x,y)\leq t$ (where it is implicitly assumed that the diameter of the graph is at least $t$).
From \eqref{eq: walk regular}, we obtain that a connected graph is $t$-walk-regular if and only if for every minimal graph idempotent the $(x,y)$-entry
only depends on $\dist(x,y)$, provided that the latter is at most $t$ (see Dalf\'o, Fiol, and Garriga
\cite{DFG09}). In other words, for a fixed minimal graph idempotent $F$ for $\theta$, there exist constants $\alpha_i:=\alpha_i(\theta)$, for $0\leq i\leq t$, such that $B_i\circ F=\alpha_i B_i$, where $\circ$ is the entrywise product.

Given a vertex $x$ in a graph $\Gamma$ and vertex $y$ at distance $i$ from $x$, we consider the numbers
$a_i(x,y)=|\Gamma(y)\cap \Gamma_i(x)|$, $b_i(x,y)=|\Gamma(y)\cap \Gamma_{i+1}(x)|$, and
$c_i(x,y)=|\Gamma(y)\cap\Gamma_{i-1}(x)|$. A connected graph $\Gamma$ with diameter $D$ is {\em distance-regular} if these
parameters do not depend on $x$ and $y$, but only on $i$, for $0\leq i\leq D$. If this is the case then these numbers
are denoted simply by $a_i$, $b_i$, and $c_i$, for $0\leq i\leq D$, and they are called the {\em intersection numbers}
of $\Gamma$. Also, if a connected graph $\Gamma$ is $t$-walk-regular, then the intersection numbers of $\G$ are well-defined for $0\leq
i\leq t$ (see Dalf\'o et al.~\cite[Prop.~3.15]{DvDFGG11}).

\subsection{Association schemes}\label{sec:associationschemes}
Let $X$ be a finite set, say with $n$ elements. An {\em association scheme} with rank $d+1$ on $X$ is a pair $(X,\mathcal{R})$ such that
\begin{enumerate}[(i)]
  \item $\mathcal{R}=\{R_0,R_1,\cdots,R_d\}$ is a partition of $X\times X$,
  \item $R_0:=\{(x,x)\mid x\in X\}$,
  \item for each $i$ $(0 \leq i \leq d)$  $R_i = R_i^{\top}$, i.e., if $(x,y)\in R_i$ then $(y,x)\in R_i$,
  \item there are numbers $p^h_{ij}$ --- the {\em intersection numbers} of $(X,\mathcal{R})$ --- for $0\leq i,j,h\leq d$, such that for every pair $(x,y)\in R_h$ the number of $z\in X$ with $(x,z)\in R_i$ and $(z,y)\in  R_j$ equals $p^h_{ij}$.
\end{enumerate}

In the literature, more general definitions of association schemes are available. We will use these also in Section~\ref{sec:cubecovers}. In particular, we will refer to them as non-symmetric association schemes when not all relations are symmetric (in this case (iii) is replaced by $R_i = R_{i'}^{\top}$ for some $i'$). A non-symmetric association scheme can even be non-commutative in the sense that $p^h_{ij} \neq p^h_{ji}$ for some $h,i,j$.
Association schemes in this broader sense are generalizations of so-called ``Schurian schemes'' that arise naturally from the action of a finite transitive group on $X$; the orbitals (the orbits on $X \times X$) of such a group action form the relations of a (possibly non-commutative or non-symmetric) association scheme.

From now on, we will however assume that association schemes are symmetric (as in the above definition), unless we specify explicitly that it is non-symmetric or non-commutative.

The elements $R_i$ $(0\leq i\leq d)$ of $\mathcal{R}$ are called the {\em relations} of $(X,\mathcal{R})$. For each $i>0$, the relation $R_i$ can be interpreted as a graph $\G$ with vertex set $X$ if we call two vertices $x$ and $y$ adjacent whenever $(x,y) \in R_i$. We call $\G$ the {\em scheme graph} of $R_i$, that is regular with valency $k_i:=p^0_{ii}$. The corresponding adjacency matrix $A_i$ is called the {\em relation matrix} of $R_i$, for $i>0$, and we let $A_0=I$ be the relation matrix of $R_0$. It is easy to see that the conditions (i)-(iv) are equivalent to conditions (i)'-(iv)' on the relation matrices:
\begin{enumerate}[(i)']
  \item $\displaystyle\sum^d_{i=0}A_i=J$, where $J$ is the all-one matrix,
  \item $A_0=I$, where $I$ is the identity matrix,
  \item $(A_i)^{\top}=A_i$ for all $i\in\{0,1\cdots,d\}$,
  \item $A_iA_j=\displaystyle\sum^d_{h=0}p^h_{ij}A_h$.
\end{enumerate}
The {\em Bose-Mesner algebra} $\mathcal{M}$ of $(X,\mathcal{R})$ is the matrix algebra generated by $\{A_i\mid i=0,\ldots,d\}$. From (iv)' we see that $\{A_i\mid i=0,\ldots,d\}$ is a basis of $\mathcal{M}$ and hence $\mathcal{M}$ is $(d+1)$-dimensional. Note that the Bose-Mesner algebra is closed under both ordinary multiplication and entrywise multiplication $\circ$. From (iii)' and (iv)', it follows that the relation matrices commute, and hence all the matrices in $\mathcal{M}$ are simultaneously diagonalizable. It follows that $\mathcal{M}$ has a basis of {\em minimal scheme idempotents} $E_0,E_1,\cdots,E_d$, which we can order such that $nE_0$ is the all-ones matrix $J$. The rank of $E_j$ is denoted by $m_j$ and is called the {\em multiplicity} of $E_j$, for $0\leq j\leq d$.

Now the Bose-Mesner algebra $\mathcal{M}$ has two bases and we can express each basis in terms of the other. Define constants $P_{ji}$ and $Q_{ij}$ $(0\leq i,j\leq d)$ by
\begin{equation}\label{eq: P and Q}
A_i=\sum^d_{j=0}P_{ji}E_j~{\rm and}~E_j=\frac{1}{n}\sum^d_{i=0}Q_{ij}A_i.
\end{equation}
Note that $m_j={\rm rk} E_j={\rm tr} E_j=Q_{0j}$. From \eqref{eq: P and Q}, we have
\begin{equation}\label{eq: eigenvalue}
A_iE_j=P_{ji}E_j,
\end{equation}
hence the numbers $P_{ji}$ are called the {\em eigenvalues} of $(X,\mathcal{R})$.
In this paper, we shall mainly focus on the eigenvalues of the scheme graph of $R_1$. In this case we call $P_{j1}$ $(0\leq j\leq d)$ the {\em corresponding eigenvalue} on $E_j$ and it is denoted by $\theta_j$, i.e., $A_1E_j=\theta_jE_j$. Note that these eigenvalues $\theta_j$ need not be distinct, for example in the Johnson scheme $J(7,3)$ defined on the triples of a $7$-set, the relation defined by ``intersecting in $1$ point'' has this property (because it is strongly regular in an association scheme with rank $4$).

Since the minimal scheme idempotents form a basis of $\mathcal{M}$, we have
\begin{equation}\label{eq: krein}
E_i\circ E_j=\frac{1}{n}\sum^d_{h=0}q^h_{ij}E_h
\end{equation}
for certain real numbers $q^h_{ij}$ ($0\leq h,i,j\leq d$) that are called {\em Krein parameters}. The Krein parameters are nonnegative and $q^0_{ij}=\delta_{ij}m_j$, where $\delta_{ij}$ is $1$ whenever $i=j$ and $0$ otherwise.
From \eqref{eq: P and Q}, we also have
\begin{equation}\label{eq: cos}
 E_j\circ A_i=\frac{Q_{ij}}{n}A_i.
 \end{equation}
It follows that $(E_j)_{xx}=\frac{Q_{0j}}{n}=\frac{m_j}{n}$ for all $x\in X$. For $(x,y)\in R_i$, let $\w_{xy}=\w_{xy}(j)=\frac{(E_j)_{xy}}{(E_j)_{xx}}=\frac{Q_{ij}/n}{m_j/n}=\frac{Q_{ij}}{m_j}$. We call these numbers $\w_i=\w_i(j)=\frac{Q_{ij}}{m_j}$ the {\em cosines} corresponding to $E_j$, and note that $\w_0=1$. From \eqref{eq: eigenvalue} and \eqref{eq: cos}, it follows that if $(x,y)\in R_h$, then
\begin{equation}\label{eq: eigenvector}
P_{ji}\w_{h}=P_{ji}\w_{xy}=\displaystyle\sum_{z\in R_i(x)}\w_{zy}=\sum_{\ell=1}^d p^h_{i\ell}\w_{\ell},
\end{equation}
where (here and elsewhere) $R_i(x)=\{z\mid (x,z)\in R_i\}$.

From a standard property of the entries of $Q$, see \cite[Lemma~2.2.1.(iv)]{bcn89}, we obtain that
\begin{equation}\label{eq: multcos}
m_j\sum_{i=0}^dk_i\w_i^2=n.
\end{equation}

For more background on association schemes, see \cite{BI}, \cite[Ch.~2]{bcn89}, and \cite{MT}.

\subsection{Partially metric association schemes and Godsil's bound}\label{sec:partiallymetricschemes}
An association scheme $(X,\mathcal{R})$ with rank $d+1$ is called {\em $t$-partially metric (with respect to the connected relation $R$)} if --- possibly after reordering of the relations --- $A_i$ is a polynomial of degree $i$ in $A$ for $i =1,2,\ldots, t$, where $A$ is the relation matrix of $R=R_1$ (where implicitly it is assumed that $t \leq d$). This is equivalent to $R_i$ being the distance-$i$ graph of the scheme graph of $R$ for $i \leq t$. Note that the distance-$i$ graph $\G_i$ of a scheme graph $\G$ is always a union of relations $R_j$. The scheme $(X,\mathcal{R})$ is called {\em metric} if it is $d$-partially metric; in this case $R$ is a distance-regular graph.
For the sake of readability, we will assume in the remainder of the paper that for a partially $t$-metric scheme, the relations are ordered according to distance, up to distance $t$ (as in the above definition), unless specified differently. We note that a $t$-partially metric scheme is clearly also $s$-partially metric for $s \leq t$. Every association scheme with at least one connected relation is $1$-partially metric; we therefore call an association scheme {\em partially metric} if it is at least $2$-partially metric. To ensure that every metric association scheme is also partially metric, we also say that an association scheme with rank $2$ (where there is no distance-$2$ relation) is partially metric. We finally note that the concept of $t$-partially metric can be extended to non-symmetric schemes with respect to a symmetric relation. Such $t$-partially metric (possibly non-symmetric) schemes would arise naturally from $t$-arc-transitive graphs, for example; see also Section~\ref{sec:cubecovers}.

If the association scheme $(X,\mathcal{R})$ is $t$-partially metric, then the {\em corresponding scheme graph} $\G$ of $R_1$ is called a {\em $t$-partially metric scheme graph}. This scheme graph is $t$-partially distance-regular in the sense of \cite{DvDFGG11}, and even stronger, it is $t$-walk-regular. Thus, the intersection numbers of $\G$ are well-defined for $0\leq i\leq t$. In this case we have $a_i=p^i_{1i}, b_i=p^i_{1,i+1}$ and $c_i=p^i_{1,i-1}$ for $0\leq i\leq t-1$, and $a_t=p^t_{1t}$, $c_t=p^t_{1,t-1}$ and  $b_t=b_0-a_t-c_t$, where $b_0=k=p^0_{11}$ is the valency of $\Gamma$. An illustrating example of a $3$-partially metric scheme graph is given by the so-called flag graph of the $11$-point biplane; see Figure 1 in \cite{CDKP} or \cite{DvDFGG11} for the corresponding ``relation-distribution diagram''. Such a diagram is similar as the distance-distribution diagram of a distance-regular graph. The {\em relation-distribution diagram} of an association scheme with respect to a scheme graph $R_1$ has a ``bubble'' for each relation $R_i$, inside of which we depict $k_i$, and we connect the bubble of $R_i$ by an edge to the bubble of $R_j$ if $p^i_{1j}>0$, and depict this intersection number on top of the edge; see for example Figure~\ref{fig:dodecahedron}.

From \eqref{eq: eigenvector}, we now obtain that
\begin{align*}
\theta &=b_0\w_1\\
\theta \w_h&=c_h \w_{h-1}+a_h \w_h+b_h\w_{h+1} \qquad (1\leq h\leq t-1),
\end{align*}
where $\w_h$ $(0\leq h\leq d)$ are the cosines corresponding to a minimal scheme idempotent $E$ for corresponding eigenvalue $\theta$.
It follows in particular that if $t \geq 2$, then
\begin{equation}\label{eq: smallcosines}
\w_0=1,~ \w_1=\theta/k,~ \w_2=\frac{\theta^2-a_1\theta-k}{kb_1}.
\end{equation}

As an immediate consequence of \cite[Thm.~4.3]{CDKP}, we find the following generalization of Godsil's bound \cite[Thm.~1.1]{g88}.

\begin{theorem}\label{thm: godsil}
Let $(X,\mathcal{R})$ be a partially metric association scheme and assume that the corresponding scheme graph $\G$ has valency $k\geq3$. Let $E$ be a minimal scheme idempotent of $(X,\mathcal{R})$ with multiplicity $m$ for corresponding eigenvalue $\theta\neq\pm k$. If $\G$ is not complete multipartite, then $k\leq\frac{(m+2)(m-1)}{2}$.
\end{theorem}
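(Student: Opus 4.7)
The plan is to apply \cite[Thm.~4.3]{CDKP} essentially verbatim. First I would verify the setup: because $(X,\mathcal{R})$ is partially metric, the scheme graph $\Gamma = R_1$ is $2$-walk-regular, as discussed in Section~\ref{sec:partiallymetricschemes}, and so the intersection numbers $k = b_0$, $a_1$, $b_1$, and $c_1 = 1$ are well-defined. For a minimal scheme idempotent $E$ with corresponding eigenvalue $\theta$, the cosines $\omega_0, \omega_1, \omega_2$ are then determined by \eqref{eq: smallcosines}, so $E$ plays the role in $\Gamma$ of a minimal idempotent whose first three cosines satisfy the usual three-term recursion up to distance~$2$. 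This is precisely the local structure needed for the CDKP bound.

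Conceptually, Godsil's original argument for distance-regular graphs passes through the spherical representation $x \mapsto E e_x$ of the vertex set into the $m$-dimensional column space of $E$. The inner products of these image points depend only on the relation between $x$ and $y$, and in particular for vertices at distance at most $2$ only on the graph distance; a rank inequality for symmetric quadratic forms on the image then yields $k \leq (m+2)(m-1)/2$. The degenerate cases are exactly $\theta = \pm k$ and $\Gamma$ complete multipartite. The content of \cite[Thm.~4.3]{CDKP} is that this argument uses only $2$-walk-regularity of $\Gamma$ together with the cosines up to $\omega_2$, so full distance-regularity is not needed; the partially metric hypothesis supplies exactly what is required.

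Thus, having noted that $\Gamma$ has valency $k \geq 3$, that $\theta \neq \pm k$, and that $\Gamma$ is not complete multipartite, the bound $k \leq (m+2)(m-1)/2$ with $m = \mathrm{rk}\, E$ follows as an immediate application of \cite[Thm.~4.3]{CDKP}. The only obstacle I anticipate is pure bookkeeping: checking that the excluded cases ($\theta = \pm k$, complete multipartite) in the CDKP theorem match those in the present statement, and confirming that the multiplicity parameter in the cited result is indeed the rank of the scheme idempotent $E$ and not, say, the multiplicity of $\theta$ as an eigenvalue of $A_1$ (these can differ if two minimal idempotents share the eigenvalue $\theta$ on $R_1$, but in that case the bound is to be applied to each $E$ separately). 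No new argument beyond invoking \cite[Thm.~4.3]{CDKP} is expected.
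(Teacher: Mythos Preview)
Your proposal is correct and matches the paper's own treatment exactly: the paper gives no separate proof but simply records the theorem as ``an immediate consequence of \cite[Thm.~4.3]{CDKP},'' relying on the fact (stated in Section~\ref{sec:partiallymetricschemes}) that the corresponding scheme graph of a partially metric scheme is $2$-walk-regular. Your added remark about the distinction between $\mathrm{rk}\,E$ and the graph-eigenvalue multiplicity of $\theta$ is a worthwhile caution that the paper does not spell out; the resolution is that the representation argument behind \cite[Thm.~4.3]{CDKP} uses only that $E\circ A_i=\alpha_iA_i$ for $i\le 2$, which any minimal scheme idempotent satisfies, so the bound indeed holds with $m=\mathrm{rk}\,E$.
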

This result implies that if $k\geq3$ then $m \geq 3$. For $k=2$, we only have the polygons and they have multiplicity $2$ for all minimal scheme idempotents except those for corresponding eigenvalue $\pm 2$. If $\G$ is complete multipartite, then $d=2$. In this case, it follows that if $k \geq 3$, then multiplicity $2$ only occurs for the complete tripartite graphs, and multiplicity $1$ only occurs for eigenvalue $\pm k$ of the complete bipartite graphs. Multiplicity $3$ occurs only for the complete $4$-partite graphs and the complete tripartite cocktail party graph, also known as the octahedron.

\subsection{Product schemes and the bipartite double}\label{sec:bipartitedouble}

Let $(X, {\mathcal R})$ be an association scheme with rank $d+1$ and relation matrices $A_i$ for $i=0,1,\dots,d$, and let $(X', {\mathcal R}')$ be an association scheme with rank $d'+1$ with relation matrices $A_j'$ for $j=0,1,\dots,d'$. The {\em direct product} of $(X, {\mathcal R})$ and $(X', {\mathcal R}')$ is the association scheme with relation matrices $A_i \otimes A_j'$ for $i=0,1,\dots,d$ and $j=0,1,\dots,d'$. It is easy to see that the minimal idempotents of this direct product scheme are also all possible Kronecker products of the minimal idempotents of $(X, {\mathcal R})$ and $(X', {\mathcal R}')$; see also \cite[Chapter~3]{bailey}. Starting from an association scheme $(X, {\mathcal R})$ with a multiplicity three, one can construct other association schemes with a multiplicity three by taking the direct product of $(X, {\mathcal R})$ with any other scheme. Also other kinds of product constructions for association schemes are possible, giving rise to many association schemes with a multiplicity three, and suggesting that classifying all association schemes with a multiplicity three may be impossible. Likewise, multiplicity two may be too hard, although in this case our result in \cite[Prop.~6.5]{CDKP} should be useful.

The {\em bipartite double scheme} $BD(X,\mathcal{R})$ of $(X,{\mathcal R})$ is the direct product of $(X,{\mathcal R})$ and the rank two association scheme on two vertices. In this way, every minimal idempotent of $(X,{\mathcal R})$ with multiplicity $m$ corresponds to two minimal idempotents of $BD(X,\mathcal{R})$ with multiplicity $m$. For a connected graph $\Gamma$ with vertex set $V$, the {\em bipartite double} of $\Gamma$ is the graph whose vertices are the symbols $x^{+},x^{-}~ (x\in V)$ and where $x^+$ is adjacent to $y^-$ if and only of $x$ is adjacent to $y$ in $\G$. If $\Gamma$ is the scheme graph of a relation $R$ in $(X, {\mathcal R})$, then the bipartite double of $\G$ is a scheme graph in the bipartite double scheme $BD(X,\mathcal{R})$.

If $(X, {\mathcal R})$ is $t$-partially metric with corresponding scheme graph $\G$ having odd-girth at least $2t+1$, then the bipartite double of $(X, {\mathcal R})$ is also $t$-partially metric. This result follows from the arguments given in the proof of the analogous result for $t$-walk-regular graphs in \cite[Prop.~3.1]{CDKP}.

\subsection{Quotient schemes and covers}\label{sec:covers}

An association scheme is called {\em imprimitive} if a non-trivial union of some of the relations is an equivalence relation. In this case, there is a subscheme on each of the equivalence classes, and a quotient scheme on the set of equivalence classes. The original scheme is called a cover of the quotient scheme. The intersection numbers and Krein parameters of the subschemes and the quotient scheme follow from those of the original scheme. Like all direct product schemes, the bipartite double scheme $BD(X,\mathcal{R})$ is an example of an imprimitive association scheme; it is a double cover of $(X,\mathcal{R})$. For details, we refer the reader to \cite[\S~2.9]{BI}, \cite[\S~2.4]{bcn89}, or \cite{DMM}.

A particular way to construct covers of graphs is by using voltage graphs. Let $\G=(V,E)$ be a graph and let $(G,+)$ be a group. Let $\vec{E}$ be the set of arcs of $\G$ (for every edge $\{x,y\}$, there are two opposite arcs: $(x,y)$ and $(y,x)$). A map $\alpha: \vec{E} \rightarrow G$ such that $\alpha(x,y)=-\alpha(y,x)$ for every edge $\{x,y\}$ is called a {\em voltage assignment}, and $(V,E,\alpha)$ is called a {\em voltage graph}. The {\em derived graph} $\G'$ of this voltage graph is a cover of $\G$; it has vertex set $V \times G$, and if $\{x,y\}$ is an edge in $\G$, then $\G'$ has edges $\{(x,g),(y,g+\alpha(x,y))\}$ for every $g\in G$. Every double cover is the derived graph of a voltage graph with group $\mathbb{Z}_2$. In this case, the situation is simpler, and we can put voltages on the edges instead of the arcs. For example, the bipartite double can be obtained by putting voltage $1$ on every edge.

\subsection{A light tail}\label{sec:lighttail}

Let $(X,\mathcal{R})$ be a partially metric association scheme and let $A$ be the relation matrix of $R_1$. A minimal scheme idempotent $E:=E_j$  for corresponding eigenvalue $\theta$ is called a {\em light tail} if the matrix $F:=\displaystyle\sum_{h\neq0}q^h_{jj}E_h$ is nonzero and $AF=\eta F$ for some real number $\eta$. Thus, if $q^h_{jj} \neq 0$, then the corresponding eigenvalue on $E_h$ is equal to $\eta$, for all $h\neq0$. Because $R_1$ is connected, this also implies that $\eta \neq k$. We call $F$ the {\em associated matrix }for $E$ and $\eta$ the {\em corresponding eigenvalue} on $F$. We call the light tail {\em degenerate} if $\theta=\eta$ and {\em non-degenerate} otherwise. This generalizes the concept of light tails in distance-regular graphs that was introduced by Juri\v{s}i\'{c}, Terwilliger, and \v{Z}itnik \cite{JTZ10}. Note that $F=nE \circ E -mE_0$ by \eqref{eq: krein}, where $m=m_i$ is the rank of $E$, which implies that $F_{xx}=\frac1n m(m-1)$. Because $F$ is positive semidefinite, it follows that $F=0$ if and only if $m=1$. By Theorem~\ref{thm: godsil} and the remarks thereafter this is equivalent to $\theta=\pm k$, where $k$ is the valency of $R_1$.
Let us now define $\widetilde{F}:=\frac{n}{m(m-1)}F$, so that $\widetilde{F}_{xx}=1$. Because $\widetilde{F}$ is in the Bose-Mesner algebra of $(X,\mathcal{R})$, there are $\rho_0,\rho_1,\ldots,\rho_d$ such that $\widetilde{F}\circ A_i=\rho_i A_i$ for all $i=0,1,\dots,d$. Similar as for minimal scheme idempotents, we call these numbers the {\em cosines} corresponding to $F$, and we let $\rho_{xy}=\rho_i$ for $(x,y)\in R_i$. Similar as \eqref{eq: eigenvector}, the following now holds for $(x,y)\in R_h$:
\begin{equation*}\label{eq: eigenvector2}
\eta \rho_h=\displaystyle\sum_{z\in R_1(x)}\rho_{zy}=\sum_{\ell=1}^dp^h_{1\ell}\rho_{\ell}.
\end{equation*}
In particular, this implies that
$\rho_0=1, \rho_1=\eta/k,$ and $\rho_2=\frac{\eta^2-a_1\eta-k}{kb_1}.$
It moreover follows from the equation $F=nE \circ E -mE_0$ that
\begin{equation}\label{eq: cosinesrhoomega}
(m-1)\rho_i=m\w_i^2-1,
\end{equation}
where $\w_i$ are the cosines corresponding to $E$, for $i=0,1,\dots,d$. Working out this equation for $i=1$ gives that
\begin{equation}\label{eq: etatheta}
(m-1)\eta=\frac{m}{k}\theta^2-k.
\end{equation}

Our generalization of light tails is motivated by the characterization of the case of equality in the following result on the multiplicities of minimal scheme idempotents. For distance-regular graphs, this bound was derived by Juri\v{s}i\'{c}, Terwilliger, and \v{Z}itnik \cite{JTZ10}, and their proof can be followed almost completely.

\begin{theorem}\label{thm: light tail 2}  Let $(X,\mathcal{R})$ be a partially metric association scheme with rank $d+1\geq3$, and assume that the corresponding scheme graph $\G$ has valency $k\geq3$. Let $E$ be a minimal scheme idempotent with multiplicity $m$ for corresponding eigenvalue $\theta\neq\pm k$. Then
\begin{equation}\label{eq: light tail 1}
m\geq k-\frac{k(\theta+1)^2a_1(a_1+1)}{((a_1+1)\theta+k)^2+ka_1b_1},
\end{equation}
with equality if and only if $E$ is a light tail.
\end{theorem}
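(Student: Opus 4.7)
My approach would closely follow the method of Juriŝi\'c, Terwilliger, and \v{Z}itnik \cite{JTZ10} originally developed for distance-regular graphs. Set
$$\eta^{\ast}\;:=\;\frac{m\theta^{2}-k^{2}}{k(m-1)},$$
which by \eqref{eq: etatheta} is the unique value that the eigenvalue $\eta$ is forced to take whenever $E$ is a light tail. First I would consider the trace
$N:=\tr\bigl(F(A-\eta^{\ast}I)^{2}\bigr)$.
Expanding $F=\sum_{h\neq 0}q^{h}_{jj}E_{h}$ and $(A-\eta^{\ast}I)^{2}=\sum_{h}(\theta_{h}-\eta^{\ast})^{2}E_{h}$ and using the orthogonality of the minimal idempotents, one obtains
$N=\sum_{h\neq 0}q^{h}_{jj}(\theta_{h}-\eta^{\ast})^{2}m_{h}\geq 0$,
with equality iff for every $h\neq 0$ one has $q^{h}_{jj}=0$ or $\theta_{h}=\eta^{\ast}$. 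Since $AF-\eta^{\ast}F=\sum_{h\neq 0}q^{h}_{jj}(\theta_{h}-\eta^{\ast})E_{h}$, this equality case is exactly the condition $AF=\eta^{\ast}F$, i.e., $E$ is a light tail.

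Next I would evaluate $N$ concretely via the cosines. Since $F_{xx}=m(m-1)/n$ and $F_{xy}=\frac{m(m-1)}{n}\rho_{h}$ for $(x,y)\in R_{h}$, the diagonal sum gives $\tr(F)=m(m-1)$, the sum over $R_{1}$ gives $\tr(FA)=km(m-1)\rho_{1}$, and counting common neighbors of pairs yields
$\tr(FA^{2})=m(m-1)\sum_{h}k_{h}\rho_{h}p^{h}_{11}$.
Here the partially metric hypothesis is decisive: any pair in $R_{h}$ with $h\geq 3$ has graph-distance at least $3$, so $p^{h}_{11}=0$; hence only $h\in\{0,1,2\}$ contribute, and using $k_{2}c_{2}=kb_{1}$ one gets
$\tr(FA^{2})=km(m-1)(1+a_{1}\rho_{1}+b_{1}\rho_{2})$.
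The relation $k\rho_{1}=\eta^{\ast}$, which comes from \eqref{eq: cosinesrhoomega} for $i=1$ together with $\omega_{1}=\theta/k$ from \eqref{eq: smallcosines}, then collapses $N\geq 0$ (after dividing by $m(m-1)>0$) to
$$k+a_{1}\eta^{\ast}+kb_{1}\rho_{2}-(\eta^{\ast})^{2}\;\geq\;0.$$

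Finally I would substitute $\rho_{2}=(m\omega_{2}^{2}-1)/(m-1)$ with $\omega_{2}=(\theta^{2}-a_{1}\theta-k)/(kb_{1})$ from \eqref{eq: smallcosines} and the explicit form of $\eta^{\ast}$, then clear denominators to produce a quadratic polynomial inequality $L(m)\geq 0$. A short check shows that $L(0)=0$, reflecting a trivial common factor of $m$, so $L$ factors as $L(m)=m(\alpha m+\beta)$ for coefficients $\alpha,\beta$ polynomial in $k,\theta,a_{1},b_{1}$. Expanding these coefficients and repeatedly using $k=1+a_{1}+b_{1}$ yields $\alpha>0$ and
$-\beta/\alpha \;=\; k-\frac{k(\theta+1)^{2}a_{1}(a_{1}+1)}{((a_{1}+1)\theta+k)^{2}+ka_{1}b_{1}}$,
so that $m\geq 1$ makes $L(m)\geq 0$ equivalent to the claimed bound, with equality exactly in the light tail case already identified. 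The main obstacle is this final algebraic step: verifying the factorization and matching the nonzero root of $L$ with the stated expression is routine but lengthy polynomial manipulation, essentially as in the distance-regular case of \cite{JTZ10}, which the statement indicates can be followed almost verbatim.
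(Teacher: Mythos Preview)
Your approach is correct and essentially the same as the paper's. The paper applies Cauchy--Schwarz to the vectors
$\mathbf{v_0}=\bigl[\sqrt{q^h_{jj}m_h}\bigr]_{h\neq 0}$ and $\mathbf{v_1}=\bigl[\theta_h\sqrt{q^h_{jj}m_h}\bigr]_{h\neq 0}$,
whereas you compute $\tr\bigl(F(A-\eta^{\ast}I)^2\bigr)=\sum_{h\neq 0}q^h_{jj}m_h(\theta_h-\eta^{\ast})^2\ge 0$; these are the same inequality once one checks (as you implicitly do via $k\rho_1=\eta^{\ast}$) that your $\eta^{\ast}$ coincides with the projection coefficient $\langle\mathbf{v_0},\mathbf{v_1}\rangle/\|\mathbf{v_0}\|^{2}$. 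Both arguments use the partially metric hypothesis in precisely the way you identify---to force $p^{h}_{11}=0$ for $h\ge 3$ in the evaluation of $\tr(FA^{2})$ (equivalently $\|\mathbf{v_1}\|^{2}$)---and both defer the residual algebra to \cite{JTZ10}.
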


\begin{proof}
We give a sketch of the proof of the first part, as most details are the same as in the case of distance-regular graphs; see \cite[Thm~3.2 and 4.1]{JTZ10}. Let $j$ be such that $E=E_j$. Then the bound \eqref{eq: light tail 1} follows from applying Cauchy-Schwarz to
$$\mathbf{v_0}=\left[\sqrt{q_{jj}^1m_1},\ldots,\sqrt{q_{jj}^dm_d}\right]~~{\rm and}~~\mathbf{v_1}=\left[\theta_1\sqrt{q_{jj}^1m_1},\ldots,\theta_d\sqrt{q_{jj}^dm_d}\right].$$
The bound is tight if and only if $\mathbf{v_0}$ and $\mathbf{v_1}$ are linearly dependent, which is the case if and only if $\theta_h$ is the same for all $h \neq 0$ such that $q^h_{jj}\neq 0$, in other words, if and only if $E_j$ is a light tail.
\end{proof}

\subsection{Yamazaki's lemma}\label{Yamazaki}
The following result was shown by Yamazaki \cite{Y98} and is analogous to the result that a cubic $1$-walk-regular graph is $2$-walk-regular \cite{CDKP}. For convenience and because the terminology in \cite{Y98} is different, we give a proof of this result.

\begin{lemma} {\em (}cf. \cite[Lemma 2.4]{Y98}{\em )} \label{prop: cubic 1-walk} Let $(X,\mathcal{R})$ be an association scheme with rank $d+1\geq3$. If there exists a connected relation $R\in\mathcal{R}$ with valency three, then $(X,\mathcal{R})$ is partially metric with respect to $R$.
\end{lemma}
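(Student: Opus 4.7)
The plan is to prove that the distance-$2$ graph of $\G := R_1$ coincides with a single scheme relation $R_j$, with associated matrix $A_j$ a polynomial of degree two in $A := A_1$. First, since $R_1$ is a scheme graph of valency $3$, it is $1$-walk-regular, so by \cite{CDKP} $\G$ is automatically $2$-walk-regular. A simple parity argument shows $a_1 = 0$: the number of triangles through any vertex equals $\tfrac{1}{2}ka_1 = \tfrac{3}{2}a_1$, which must be a non-negative integer, and since $a_1 \le k-1 = 2$ with $a_1 = 2$ forcing $\G \cong K_4$ (rank $2$, excluded by $d+1 \ge 3$), only $a_1 = 0$ remains.

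By $2$-walk-regularity, $p^h_{11} = c_2$ is constant over $h \in S := \{h \ne 0, 1 : p^h_{11} > 0\}$, so $A^2 - 3I = c_2 \sum_{h \in S} A_h = c_2 B_2$ where $B_2$ denotes the distance-$2$ matrix of $\G$. Hence $B_2 = (A^2 - 3I)/c_2$ is a polynomial of degree two in $A$. The remaining task is to show $|S| = 1$, so that $A_j = B_2$ for a unique $j \in S$.

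I would then extract two identities by double counting: $k_h = 3 p^1_{1h}/c_2$ (counting triples $(x, v, y)$ with $v \in R_1(x)$, $y \in R_1(v) \cap R_h(x)$) and $\sum_{h \in S} p^1_{1h} = 2$ (each $v \in R_1(x)$ has exactly $k - 1 = 2$ non-$x$ neighbors, all at distance $2$ from $x$ since $\G$ is triangle-free). Case on $c_2 \in \{1, 2, 3\}$: for $c_2 = 2$, integrality forces each $p^1_{1h}$ to be even and positive, hence $|S| = 1$; for $c_2 = 3$, each distance-$2$ vertex of $x$ is a twin of $x$ (shares its neighborhood), and a short structural analysis then yields $\G \cong K_{3,3}$, whose only scheme of rank $\ge 3$ with $R_1$ the edge relation has $|S| = 1$.

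The main obstacle is the case $c_2 = 1$, which a priori admits $|S| = 2$ with $k_h = k_{h'} = 3$ and $p^1_{1h} = p^1_{1h'} = 1$. To rule this out I would observe that the three pairs $(v_i, v_j)$ of distinct neighbors of $x$ all lie at distance $2$ in $\G$ (since $a_1 = 0$), and hence their scheme relations $\alpha, \beta, \gamma$ (of $(v_1, v_2)$, $(v_1, v_3)$, $(v_2, v_3)$ respectively) lie in $\{R_h, R_{h'}\}$. Applying the identity $p^1_{1h} = 1$ at the pair $(x, v_i) \in R_1$ forces exactly one of the two pairs of neighbors of $x$ containing $v_i$ to have relation $R_h$ and the other $R_{h'}$. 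This makes $\alpha, \beta, \gamma$ pairwise distinct, i.e., a proper edge $2$-coloring of a triangle, which is impossible. Therefore $|S| = 1$ in every case, and setting $A_2 := B_2$ yields the desired partially metric structure.
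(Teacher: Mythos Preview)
Your proof is correct, and its decisive step---the impossibility of properly $2$-edge-colouring the triangle on the three neighbours of $x$---is exactly the argument the paper uses. However, the paper's version is considerably more direct: it never invokes $2$-walk-regularity, never needs $c_2$ to be well-defined, and performs no case analysis. After noting $a_1=0$, it simply observes that $\sum_{h\in S}p^1_{1h}=2$ with each summand positive, so $|S|\le 2$, and then runs the triangle argument to exclude $|S|=2$.

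Your detour through \cite{CDKP} and your case split on $c_2$ are unnecessary, because the triangle argument you give for the case $c_2=1$ actually works verbatim whenever $|S|=2$, regardless of $c_2$: it uses only $p^1_{1h}=p^1_{1h'}=1$, and this already follows from $|S|=2$ and $\sum_{h\in S}p^1_{1h}=2$. So the integrality trick for $c_2=2$ and the $K_{3,3}$ analysis for $c_2=3$ can be deleted, as can the appeal to $2$-walk-regularity; what remains is precisely the paper's short proof.
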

\begin{proof}
Let $\G$ be the scheme graph of $R=:R_1$. Because the rank of the scheme is at least $3$, $\G$ is not the complete graph on $4$ vertices, and so $a_1=p^1_{11}=0$. If $\G_2$ is not a relation of the scheme, then it must be the union of two relations, $R_2$ and $R_3$ say, and then $p^1_{12}=p^1_{13}=1$. Now let $x$ be a vertex of $\Gamma$ and let $y_1,y_2,y_3$ be the three neighbors of $x$. Clearly these three are mutually at distance $2$. Without loss of generality, we may assume that $(y_1,y_2)\in R_2$ and $(y_1,y_3)\in R_3$ because $p^1_{12}=p^1_{13}=1$. But then $(y_2,y_3)$ should be contained in both $R_2$ and $R_3$, which is a contradiction. Thus, $(X,\mathcal{R})$ is partially metric with respect to $R$.
\end{proof}

The final lemma, which we shall call Yamazaki's lemma, is also from \cite{Y98}. Again, we give a proof for convenience and because of the different terminology in \cite{Y98}. The result is depicted in Figure~\ref{fig: cherry2}.

\begin{figure}
\begin{center}
\includegraphics[scale=0.7]{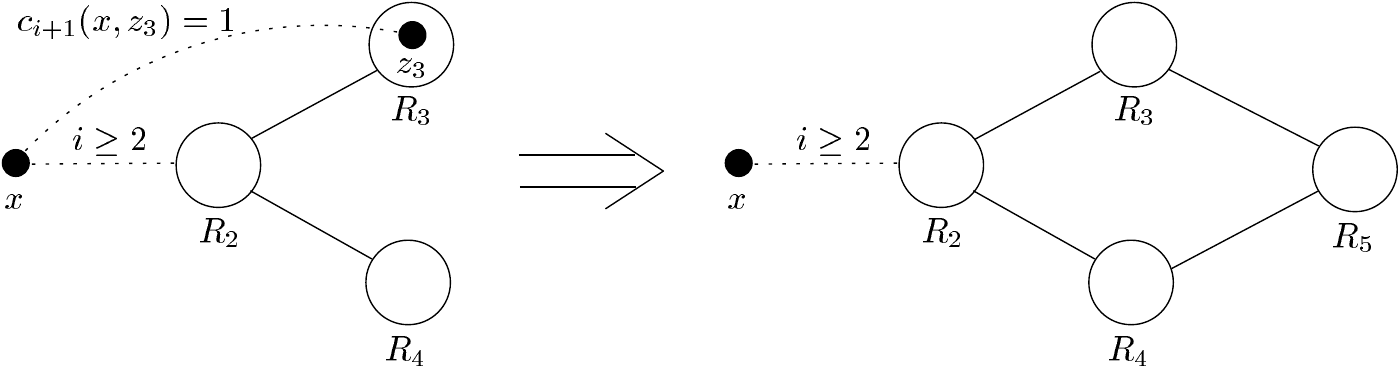}
\caption{A graphical interpretation of Yamazaki's lemma~\ref{lem: cherry}}
\label{fig: cherry2}
\end{center}
\end{figure}

\begin{lemma}{\em (}cf. \cite[Lemma 2.8]{Y98}{\em )} \label{lem: cherry}
Let $(X,\mathcal{R})$ be an association scheme with rank $d+1 \geq 4$ and a connected scheme graph $\G$ of $R_1\in \mathcal{R}$. Let $x,z$ be vertices such that $(x,z)\in R_2$ and $\dist(x,z)=i \geq 2$. Assume that there exist two distinct neighbors $z_3,z_4$ of $z$ and two distinct relations $R_3,R_4\in \mathcal{R}$ such that $(x,z_3)\in R_3$, $(x,z_4)\in R_4$, $\dist(x,z_3)=\dist(x,z_4)=i+1$, and $c_{i+1}(x,z_3)=1$. Then there exists a relation $R_5\in \mathcal{R}$ such that $p^1_{35}\neq0$, $p^1_{45}\neq0$, and $R_5 \cap \G_i = \emptyset$.
\end{lemma}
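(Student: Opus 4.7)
The plan is to take $R_5$ to be the relation of the scheme containing the pair $(z_3,z_4)$, and to verify each of the three required properties in turn.

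For the distance-disjointness condition $R_5\cap\Gamma_i = \emptyset$: since $z$ is a common $\Gamma$-neighbor of $z_3$ and $z_4$, we have $p^5_{11}\geq 1$, forcing every pair of $R_5$ to admit a common neighbor in $\Gamma$ and hence to have graph distance at most $2$. This yields the conclusion immediately when $i\geq 3$. For $i=2$ there are two sub-cases: if $z_3\sim z_4$ then $R_5=R_1\subseteq\Gamma_1$ and the conclusion is trivial; if $z_3\not\sim z_4$, one must use the hypothesis $c_{i+1}(x,z_3)=1$ together with the distinctness of $R_3$ and $R_4$ to either rule out this sub-case or to substitute an appropriate alternative relation for $R_5$ (for instance a relation $R_{j(y)}$ of the form $(x,y)$ for some $y\in\Gamma(z_3)\setminus\{z\}$ that is also forced to agree with a relation of the form $(x,y')$ for some $y'\in\Gamma(z_4)\setminus\{z\}$). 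This last sub-case is where the hypothesis $c_{i+1}(x,z_3)=1$ plays its essential role, and is the main technical obstacle.

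For the intersection-number conditions $p^1_{35}\neq 0$ and $p^1_{45}\neq 0$: by the standard commutative-scheme duality $k_1 p^1_{35}=k_3 p^3_{15}$, it suffices to produce, for each $j\in\{3,4\}$, a neighbor of $z_j$ that is in relation $R_5$ with $x$. From the triple $(x,z_3,z_4)$ one directly obtains $p^5_{34}\geq 1$ (with $c=x$); combined with the two edges $z\sim z_3$ and $z\sim z_4$, this information propagates through the Bose--Mesner algebra --- concretely, by expanding products such as $A_1A_5$ and $A_3A_1$ and tracking which $R_h$ appear with nonzero coefficient --- to yield the required non-vanishing. Because $R_3$ and $R_4$ enter symmetrically (up to the asymmetric hypothesis $c_{i+1}(x,z_3)=1$ which is only needed for the disjointness), both conditions are handled uniformly once Step~1 is settled.
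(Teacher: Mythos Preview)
Your choice of $R_5$ as the relation containing $(z_3,z_4)$ is the wrong one, and this is why both parts of your argument stall. From the configuration you have, the only nontrivial intersection numbers you can read off directly are $p^5_{11}\ge 1$ (witness $z$) and $p^5_{34}\ge 1$ (witness $x$). Neither of these, nor any ``propagation through the Bose--Mesner algebra'' that you allude to, yields $p^1_{35}\neq 0$ or $p^1_{45}\neq 0$: you would need, say, a neighbor $y$ of $z_3$ with $(x,y)\in R_5$, and there is simply no candidate for such a $y$ in your picture. The sentence about expanding $A_1A_5$ and $A_3A_1$ is not a proof; knowing that certain structure constants are nonzero does not force others to be nonzero without an explicit combinatorial witness. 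Likewise, your treatment of the distance condition openly leaves the case $i=2$ unresolved.

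The paper's proof avoids both problems by a different choice of $R_5$. Using symmetry of the scheme (and that $\Gamma_{i+1}$ is a union of relations), one first produces a neighbor $v_3$ of $x$ with $(z,v_3)\in R_3$, $\dist(z,v_3)=i+1$, and $c_{i+1}(z,v_3)=1$; then $R_5$ is taken to be the relation containing $(v_3,z_4)$. Now both intersection-number conditions are immediate: the triple $(z,v_3,z_4)$ gives $p^1_{35}\neq 0$ since $(z,z_4)\in R_1$, $(z,v_3)\in R_3$, $(v_3,z_4)\in R_5$; and the triple $(x,z_4,v_3)$ gives $p^1_{45}\neq 0$ since $(x,v_3)\in R_1$, $(x,z_4)\in R_4$, $(v_3,z_4)\in R_5$. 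For the distance condition one shows $\dist(v_3,z_4)\neq i$ directly: since $c_{i+1}(z,v_3)=1$, the unique vertex in $\Gamma(z)\cap\Gamma_i(v_3)$ is at distance $i-1$ from $x$, while $z_4$ is at distance $i+1$ from $x$, so $z_4\notin\Gamma_i(v_3)$. This handles all $i\ge 2$ uniformly, and is precisely where the hypothesis $c_{i+1}(x,z_3)=1$ (transferred by symmetry to $c_{i+1}(z,v_3)=1$) is used.
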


\begin{proof}
Because the association scheme is symmetric, there exist a neighbor $v_3$ of $x$ such that $(z,v_3)\in R_3$, $\dist(z,v_3)=i+1$, and $c_{i+1}(z,v_3)=1$. Let $R_5$ be the relation containing $(v_3,z_4)$. See Figure~\ref{fig: cherry1} for a picture of this configuration. Then $p^1_{35}\neq0$ as $(z,z_4)\in R_1$, $(z,v_3)\in R_3$, and $(v_3,z_4)\in R_5$, and similarly $p^1_{45}\neq0$ as $(x,v_3)\in R_1$, $(x,z_4)\in R_4$, and $(v_3,z_4)\in R_5$.

In order to show that $R_5 \cap \G_i = \emptyset$, it suffices to show that $\dist(v_3,z_4)\neq i$.
From $\dist(z,v_3)=i+1$ and $c_{i+1}(z,v_3)=1$, it is clear that $\Gamma(v_3)\cap\Gamma_i(z)=\{x\}$. By symmetry and because $\G_i$ is a union of relations of $\mathcal{R}$, there exists a unique vertex $y$ such that $\G(z)\cap\G_i(v_3)=\{y\}$, and it follows that $\dist(x,y)=i-1$. But $\dist(x,z_4)=i+1$, hence $\dist(v_3,z_4)\neq i$.
\end{proof}

\begin{figure}
\begin{center}
\includegraphics[scale=0.7]{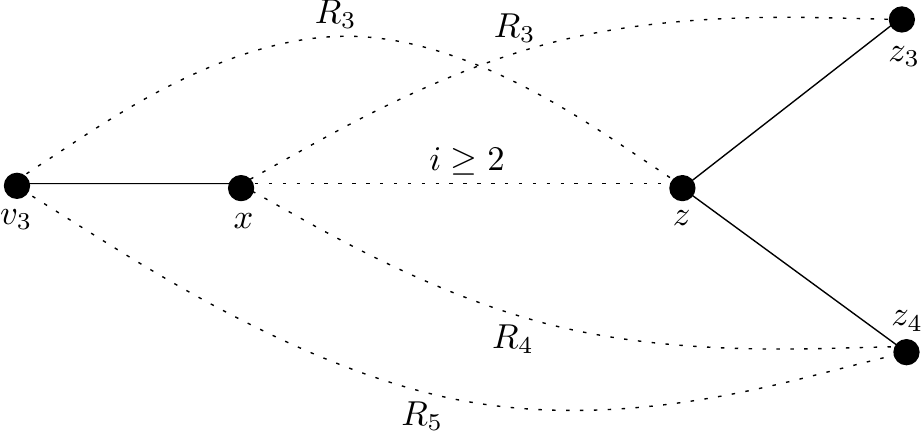}
\caption{The configuration of vertices in the proof of Yamazaki's lemma~\ref{lem: cherry}}
\label{fig: cherry1}
\end{center}
\end{figure}

\section{Uniqueness and non-existence of the relevant association schemes}\label{sec:uniqueness}

In this section, we will discuss some of the relevant association schemes having a multiplicity three that occur in the proof of the classification result in Section~\ref{sec:schemesk=m=3}.

\subsection{The dodecahedron}\label{ssec:dod} The dodecahedron graph is a distance-regular graph with spectrum $\{3^1,\sqrt{5}^3,1^5,0^4,-2^4,-\sqrt{5}^3\}$. Thus, both the corresponding metric association scheme and its bipartite double scheme have minimal scheme idempotents with a multiplicity three. Note however that the bipartite double graph does not have an eigenvalue with multiplicity three; its spectrum is $\{3^1,\sqrt{5}^6,2^4,1^5,0^8,\break -1^5,-2^4,-\sqrt{5}^6,-3^1\}$. The relation-distribution diagram of the bipartite double scheme is given in Figure~\ref{fig:dodecahedron}, where we also included the cosines for eigenvalue $\sqrt{5}$ that we obtained in the proof of Theorem~\ref{thm: classification}. We note that the bipartite double graph is also the scheme graph of a $3$-partially metric fusion scheme of the bipartite double scheme. This scheme can be obtained by fusing three times a pair of relations (i.e., $R_3\cup R_4,R_5\cup R_8$, and $R_{11}\cup R_{12}$; see Figure~\ref{fig:dodecahedron}). However, also three pairs of idempotents are ``fused'', in particular two pairs of idempotents with multiplicity three, leaving no multiplicity three in this fusion scheme.

\begin{figure}[h!]
\begin{center}
\includegraphics[scale=0.65]{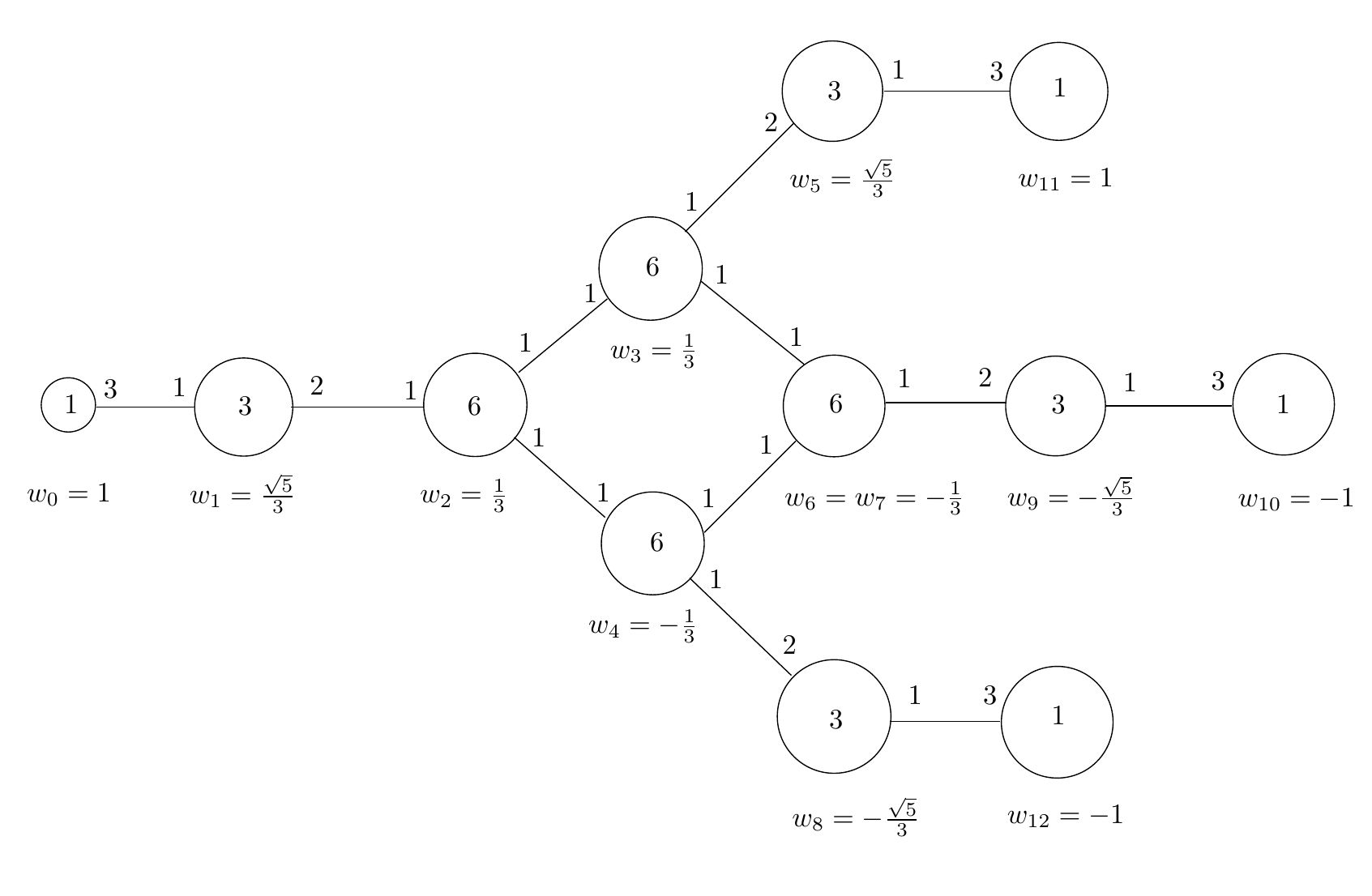}
\caption{Relation-distribution diagram of the bipartite double of the dodecahedron}
\label{fig:dodecahedron}
\end{center}
\end{figure}

\begin{proposition}\label{prop:dodecahedron} The bipartite double of the association scheme of the dodecahedron graph is the unique association scheme with scheme graph having relation-distribution diagram as in Figure~\ref{fig:dodecahedron}.
\end{proposition}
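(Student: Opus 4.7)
The plan is to reconstruct the scheme graph $\G$ of $R_1$ from the relation-distribution diagram, and then to verify that the remaining partition into relations is also determined. Reading off the diagram, $\G$ is cubic, bipartite, has $40$ vertices, and all intersection numbers $p^h_{1j}$ governing the neighborhood structure are specified. Moreover, $R_{11}\cup R_{12}$ together with $R_0$ is forced (by the valencies at the ends of the diagram) to be an equivalence relation whose classes have size two, so that $\G$ is a double cover of a graph $\G'$ on $20$ vertices.

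First I would compute the parameters of the quotient graph $\G'$ and the quotient scheme by fusing the pairs of relations that become identified under this equivalence relation. A short calculation from the diagram should produce the intersection array $\{3,2,1,1,1;1,1,1,2,3\}$ of the dodecahedron. Using the well-known uniqueness of the dodecahedron as a distance-regular graph with this intersection array (see \cite[\S~14]{DaKoTa}), $\G'$ must be the dodecahedron and the quotient scheme its metric association scheme. Since $\G$ is a connected bipartite double cover of the non-bipartite dodecahedron, and any connected bipartite double cover of a connected non-bipartite graph is unique up to isomorphism (the $\mathbb{Z}_2$-voltage assignment is determined by the bipartition of the cover, up to the action of the covering group), $\G$ must be the standard bipartite double of the dodecahedron as a graph.

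Finally, the partition into the remaining relations $R_2,\ldots,R_{12}$ is forced: since the scheme is $3$-partially metric (by Lemma~\ref{prop: cubic 1-walk} and the shape of the diagram), the distance-$i$ graphs of $\G$ for $i\leq 3$ are unions of prescribed relations, and for larger $i$ the split between paired relations (such as $R_3,R_4$ or $R_5,R_8$) is dictated by whether the endpoints lie in the same or opposite fibers of the antipodal involution that defines the double cover. The main obstacle I foresee is the double-cover uniqueness step: one must verify that the bubbles labelled $R_{11}\cup R_{12}$ together with $R_0$ really do form the fibers of a regular $\mathbb{Z}_2$-cover, which requires combining the diagram's prescribed intersection numbers with the bipartiteness of $\G$ and the non-bipartiteness of the quotient. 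Once this is in place, the remaining matching of relations to vertex pairs in the bipartite double of the dodecahedron is a routine bookkeeping check against the cosines already computed in the proof of Theorem~\ref{thm: classification}.
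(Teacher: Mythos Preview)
Your strategy is exactly the paper's: exhibit an equivalence relation with classes of size two, identify the quotient scheme as the metric scheme of the dodecahedron via its uniqueness as a distance-regular graph with intersection array $\{3,2,1,1,1;1,1,1,2,3\}$, deduce that $\G$ is the (unique) connected bipartite double cover of the dodecahedron, and conclude that the whole scheme is the bipartite double scheme. One correction on the bookkeeping: in the diagram it is $R_{11}$ alone that has valency one, so the covering equivalence relation is $R_0\cup R_{11}$, not $R_0\cup R_{11}\cup R_{12}$ --- adjoining a further nontrivial relation would force the classes to have size larger than two, contradicting your own vertex count. With that fix, your argument and the paper's coincide; the paper simply states the last step (``therefore the scheme is the bipartite double scheme'') without the extra discussion of how the individual relations are pinned down.
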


\begin{proof} Because $R_{11}$ has valency $1$, the relation $R_0\cup R_{11}$ is clearly an equivalence relation. If we take the quotient scheme with respect to this equivalence relation, we obtain an association scheme for which the scheme graph obtained from $R_1\cup R_5$ is distance-regular with valency three and distance-distribution diagram as that of the dodecahedron; this follows from Figure~\ref{fig:dodecahedron}. Because the dodecahedron and the corresponding association scheme is determined by its intersection numbers, this quotient scheme is indeed the metric association scheme of the dodecahedron. But then (the scheme graph) $R_1$ is a bipartite double cover of the dodecahedron, and hence it must be the bipartite double graph of the dodecahedron. Moreover, the association scheme is therefore the bipartite double scheme of the association scheme of the dodecahedron.
\end{proof}

\subsection{The M\"{o}bius-Kantor graph}\label{ssec:mk}The M\"{o}bius-Kantor graph is the unique double cover of the cube without $4$-cycles \cite[p.~267]{bcn89}. It is isomorphic to the generalized Petersen graph $GP(8,3)$ and has spectrum $\{3^1,\sqrt{3}^4,1^3,-1^3,-\sqrt{3}^4,-3^1\}$. It is $2$-arc-transitive and also known as the Foster graph F016A \cite{RCMD}. It generates an association scheme with scheme graph having relation-distribution diagram as in Figure~\ref{fig:mk}, where also the cosines for eigenvalue $1$ are included; these cosines follow from the relation distribution diagram using \eqref{eq: eigenvector}.

\begin{figure}[h!]
\begin{center}
\includegraphics[scale=0.65]{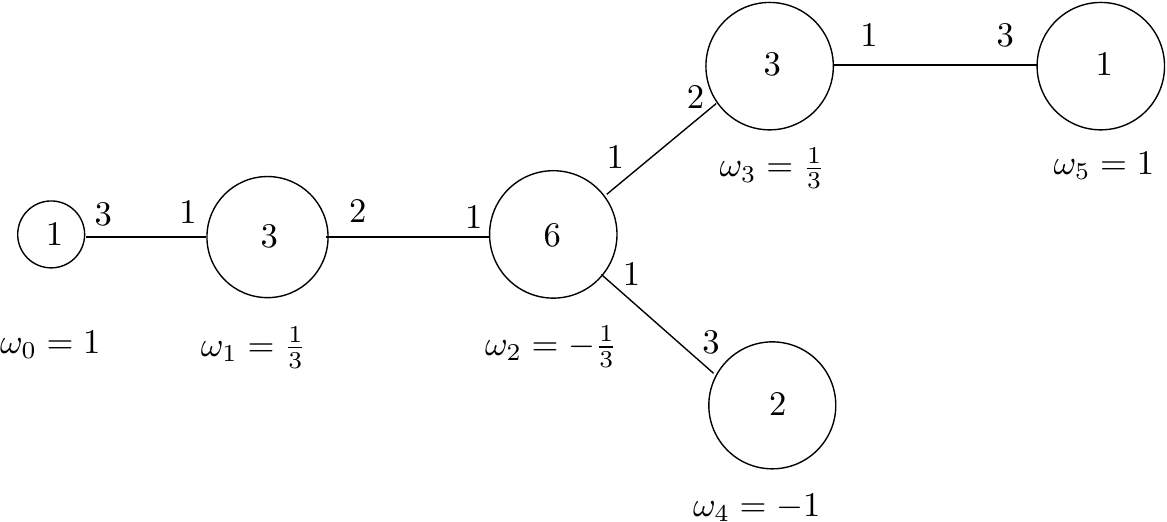}
\caption{Relation-distribution diagram of the M\"{o}bius-Kantor graph}
\label{fig:mk}
\end{center}
\end{figure}

\begin{proposition}\label{prop:mk} The association scheme of the M\"{o}bius-Kantor graph is the unique association scheme with scheme graph having relation-distribution diagram as in Figure~\ref{fig:mk}.
\end{proposition}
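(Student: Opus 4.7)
The plan is to follow the template of the proof of Proposition~\ref{prop:dodecahedron}. The first step is to locate in Figure~\ref{fig:mk} a relation $R_i$ of valency~$1$; since it is symmetric, $R_0 \cup R_i$ is automatically an equivalence relation partitioning the $16$ vertices into $8$ pairs. I would then pass to the quotient association scheme on these $8$ classes.

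The second step is to verify, by reading off the intersection numbers $p^h_{1j}$ from Figure~\ref{fig:mk}, that the image of $R_1$ in the quotient scheme is a $3$-regular graph whose distance-distribution diagram coincides with the intersection array $\{3,2,1;1,2,3\}$ of the $3$-cube $Q_3$. Since $Q_3$ is characterized by this intersection array, the quotient scheme is the metric association scheme of $Q_3$, and hence the scheme graph $\Gamma$ of $R_1$ is a connected double cover of $Q_3$.

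Reading off additionally $a_1 = p^1_{11} = 0$ and $c_2 = p^2_{11} = 1$ from Figure~\ref{fig:mk} shows that $\Gamma$ contains neither triangles nor $4$-cycles. Since the M\"{o}bius-Kantor graph is the unique double cover of $Q_3$ without $4$-cycles \cite[p.~267]{bcn89}, $\Gamma$ must be isomorphic to the M\"{o}bius-Kantor graph. The association scheme is then determined as well, exactly as in the dodecahedron case: the remaining relations are forced by the diagram together with the identification of $\Gamma$.

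The main obstacle I anticipate is the bookkeeping in the quotient step: one must verify from the $p^h_{1j}$ visible in Figure~\ref{fig:mk} that the image of $R_1$ really acquires the cube's intersection numbers (in particular $c_2=2$ and $c_3=3$) after collapsing the chosen equivalence classes, and, if more than one valency-$1$ relation appears in Figure~\ref{fig:mk}, pick the right one (namely the kernel of the cover map $\mathrm{MK}\to Q_3$); the other candidate relations either fail to give an equivalence relation whose quotient is the cube, or can be ruled out by checking that the resulting quotient does not have a $3$-regular distance-regular scheme graph. Once this is settled, the cited uniqueness from \cite{bcn89} completes the argument.
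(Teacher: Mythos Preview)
Your approach is correct but differs from the paper's. The paper gives a very short direct argument: fix a vertex, and reconstruct the graph by hand from the relation-distribution diagram, using only that there are no $4$-cycles; this forces the M\"{o}bius-Kantor graph uniquely. This is in the spirit of the proof of Proposition~\ref{prop:nauru} rather than Proposition~\ref{prop:dodecahedron}.

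Your route via the quotient is equally valid. In Figure~\ref{fig:mk} there is exactly one nontrivial relation of valency~$1$ (call it $R_5$, at distance~$4$), so your worry about choosing among several candidates does not arise. From $p^5_{13}=3$ one gets $A_5A_1=A_3$, and since $A_5^2=I$ one finds that multiplication by $A_5$ swaps $R_0\leftrightarrow R_5$ and $R_1\leftrightarrow R_3$ while fixing $R_2$ and $R_4$; hence the quotient scheme has four classes with valencies $1,3,3,1$. The image of $R_1$ is then a connected $3$-regular bipartite graph on $8$ vertices, which is already enough to force the cube without computing the full array $\{3,2,1;1,2,3\}$. The covering property (each edge of the cube lifts to a unique edge through a given preimage vertex) follows from $A_1A_5=A_3$ and the fact that $R_3$ is a distance-$3$ relation. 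Then $c_2=1$ gives girth at least~$6$, and the cited uniqueness from \cite[p.~267]{bcn89} finishes the identification of~$\Gamma$. The remaining relations are recovered from~$\Gamma$ because the two distance-$3$ classes are distinguished by $c_3(x,y)\in\{2,3\}$.

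What each approach buys: the paper's reconstruction is entirely self-contained (no appeal to the classification of cube covers) but is asserted rather than written out. Your argument is more structural and parallels Propositions~\ref{prop:dodecahedron} and~\ref{prop:foster48}; it is cleaner once the quotient bookkeeping is done, but it imports the uniqueness statement from \cite{bcn89}.
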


\begin{proof} Fix a vertex. Then it is easy to see that there is just one way (up to isomorphism) to build the graph with the given relation-distribution diagram from the perspective of the fixed vertex and using that the graph has no $4$-cycles. The obtained graph is the M\"{o}bius-Kantor graph; clearly, the other relations of the association scheme follow from this.
\end{proof}

\subsection{The Nauru graph}\label{ssec:nauru}The Nauru graph is a triple cover of the cube. It is isomorphic to the generalized Petersen graph $GP(12,5)$ and has spectrum $\{3^1, 2^6, 1^3,\linebreak 0^4, -1^3, -2^6, -3^1\}$. It is $2$-arc-transitive and also known as the Foster graph F024A \cite{RCMD}. It generates an association scheme with scheme graph having relation-\break distribution diagram as in Figure~\ref{fig:nauru}, where also the cosines for eigenvalue $1$ are included; again these cosines follow from the relation distribution diagram using \eqref{eq: eigenvector}.

\begin{figure}[h!]
\begin{center}
\includegraphics[scale=0.65]{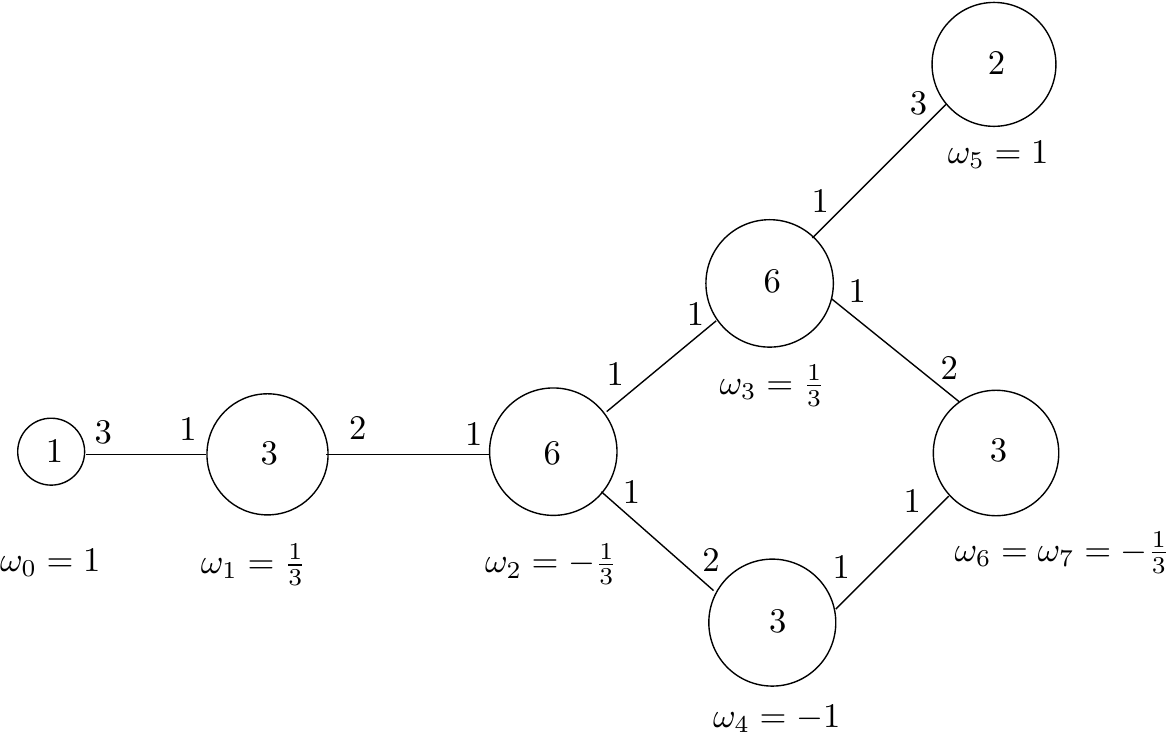}
\caption{Relation-distribution diagram of the Nauru graph}
\label{fig:nauru}
\end{center}
\end{figure}

\begin{proposition}\label{prop:nauru} The association scheme of the Nauru graph is the unique association scheme with scheme graph having relation-distribution diagram as in Figure~\ref{fig:nauru}.
\end{proposition}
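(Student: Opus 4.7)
The plan is to mimic the argument of Propositions~\ref{prop:dodecahedron} and \ref{prop:mk}: reconstruct the scheme graph $\G = R_1$ from the diagram, identify it as the Nauru graph, and then conclude that the scheme is uniquely determined. First, fix a vertex $x$. From Figure~\ref{fig:nauru} one reads off that $a_1 = 0$ (no triangles), and the splitting of the distance-$2$ sphere into distinct scheme relations forbids $4$-cycles through $x$; in particular $\G$ has girth at least $6$. Using this girth constraint together with the intersection numbers displayed in the diagram, I would inductively build up $\G_i(x)$ for $i=1,2,3,\ldots$, arguing at each step that the combinatorial choices are forced up to relabeling of the newly introduced vertices.

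Next, I would identify a relation whose union with $R_0$ forms an equivalence relation, with equivalence classes of size $3$. Taking the corresponding quotient, Figure~\ref{fig:nauru} shows that the quotient of $\G$ by this equivalence relation is a cubic graph on $8$ vertices with the distance-distribution diagram of the cube. Since the cube is determined by its intersection numbers, this quotient is the cube, and hence $\G$ is a triple cover of the cube. As in Section~\ref{sec:covers}, such a cover is described by a voltage assignment with group $\mathbb{Z}_3$, and the girth-$6$ condition (equivalently, that no $4$-cycle of the cube lifts to a $4$-cycle of $\G$) pins down the voltage assignment up to equivalence, yielding the Nauru graph. The remaining relations of $(X,\mathcal{R})$ are then forced by $\G$, since the splittings shown in the diagram must coincide with the orbits of a sufficiently transitive subgroup of $\mathrm{Aut}(\G)$ on ordered pairs of vertices at each distance (the Nauru graph being $2$-arc-transitive).

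The main obstacle will be the voltage-assignment step and the reconstruction around $x$: one must verify that the specified splittings of the distance-$2$ and distance-$3$ spheres into multiple scheme relations are compatible with exactly one voltage assignment on the cube with group $\mathbb{Z}_3$, up to equivalence. This is a finite but slightly more delicate check than in the M\"obius-Kantor case, since the Nauru graph has $24$ vertices and a richer refined relation structure at each distance; once these checks are done, uniqueness of the association scheme follows immediately.
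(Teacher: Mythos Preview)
Your overall strategy---recognise the valency-$2$ relation as the fibre relation, quotient to the cube, and then pin down a $\mathbb{Z}_3$-voltage assignment---is sound, and it is precisely the method the paper uses one step later, for Proposition~\ref{prop:foster48} (the Foster graph F048A as a $\mathbb{Z}_2$-cover of the Nauru graph). For the Nauru graph itself, though, the paper takes a more hands-on route: fixing $x$, it reconstructs all of $\G$ uniquely except for the six edges between $R_2(x)$ and $R_3(x)$; it then fixes a second base point $y\in R_5(x)$, determines the sets $R_i(y)$, and uses the fact (readable from the diagram) that every edge lies in precisely two $6$-cycles sharing no further edge to force those six remaining edges. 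The paper's argument stays entirely at the level of the single graph; yours passes through the quotient and the voltage machinery of Section~\ref{sec:covers}. Either approach works; yours is more systematic, the paper's slightly more elementary.

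Two corrections. First, in Figure~\ref{fig:nauru} the distance-$2$ sphere is a single relation $R_2$, so it is not any ``splitting'' that forbids $4$-cycles; rather $c_2=1$, visible in the diagram, does so directly. Second, and more substantively, the girth-$6$ condition alone does \emph{not} pin down the $\mathbb{Z}_3$-voltage assignment up to equivalence. There exist $\mathbb{Z}_3$-covers of the cube in which every face $4$-cycle has nonzero voltage but only one of the four hexagons of $Q_3$ has voltage $0$; such a cover has girth $6$ but only three $6$-cycles rather than twelve, and is not the Nauru graph. To rule these out you must use more of the diagram---essentially the same ``two $6$-cycles through every edge'' fact that the paper exploits---to force every hexagon of $Q_3$ to have voltage $0$, whence all six face voltages coincide and the cover is forced. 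You acknowledge in your final paragraph that extra checks are needed, but be aware that this is the heart of the argument, not a routine verification.
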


\begin{proof} Fix a vertex $x$. If one ignores the six edges between $R_2(x)$ and $R_3(x)$, then up to isomorphism, one can build the graph with the given relation-distribution diagram (seen from the perspective of $x$) in a unique way (up to equivalence), using that there are no $4$-cycles. It is easy to show that $R_0 \cup R_5$ is an equivalence relation. Now fix one of the vertices $y\in R_5(x)$. Then (again, up to equivalence) there is a unique way to determine the sets $R_i(y)$ for all $i$ (i.e., there are two equivalent ways to determine $R_6(y)$ and $R_2(y)$; the rest is determined). Also observe (by considering the edges through $x$) that every edge is in precisely two $6$-cycles that share no other edges. If we apply this to the edges between $R_1(x)$ and $R_2(x)$, use the relation distribution with respect to $y$ (i.e., the sets $R_i(y)$), and that there are no $4$-cycles, then the remaining six edges of the scheme graph follow uniquely. In particular, observe that each edge between $R_1(x)$ and $R_2(x)$ is in one $6$-cycle with vertices from $\cup_{i=0,1,2,4}R_i(x)$ and in one $6$-cycle with vertices from $\cup_{i=1,2,3,6}R_i(x)$, and the latter determines the edges between $R_2(x)$ and $R_3(x)$. The obtained graph is the Nauru graph; and again, the other relations of the association scheme follow from this.
\end{proof}

\subsection{The Foster graph F048A}\label{ssec:foster48}The Foster graph F048A is a $6$-cover of the cube, a $3$-cover of the M\"{o}bius-Kantor graph, and a $2$-cover of the Nauru graph. It is isomorphic to the generalized Petersen graph $GP(24,5)$ and has spectrum $\{3^1, \sqrt{6}^4, 2^6, \sqrt{3}^4, 1^3, 0^{12}, -1^3, -\sqrt{3}^4, -2^6, -\sqrt{6}^4, -3^1\}$. It is $2$-arc-transitive \cite{RCMD} and generates an association scheme with scheme graph having relation-distribution diagram as in Figure~\ref{fig:foster48}, where as before, the cosines for eigenvalue $1$ are included; once more these cosines follow from the relation distribution diagram using \eqref{eq: eigenvector}. In this association scheme, the eigenvalue $0$ has two minimal scheme idempotents; these have multiplicities $4$ and $8$.

\begin{figure}[h!]
\begin{center}
\includegraphics[scale=0.65]{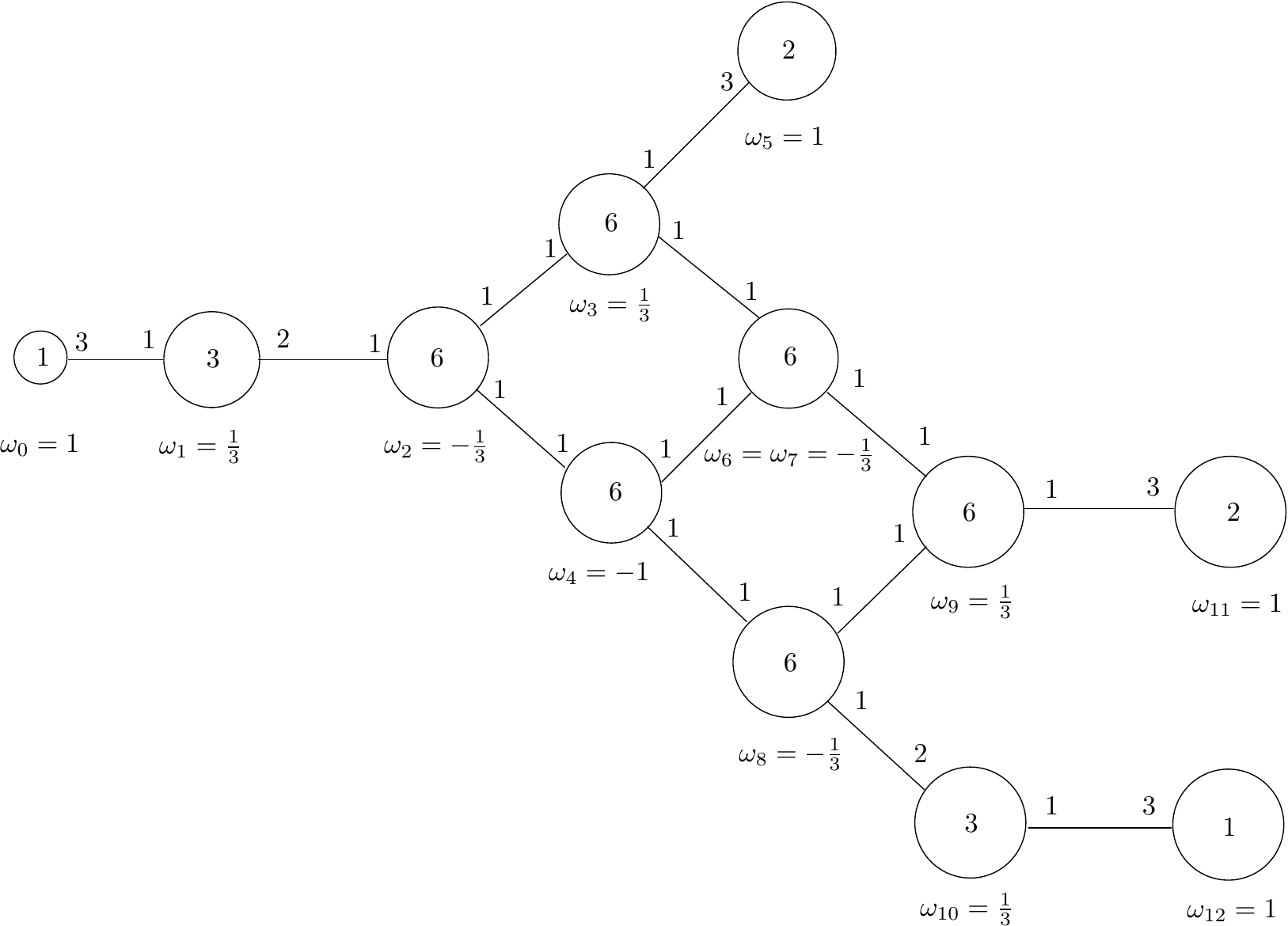}
\caption{Relation-distribution diagram of the Foster graph F048A}
\label{fig:foster48}
\end{center}
\end{figure}

\begin{proposition}\label{prop:foster48} The association scheme of the Foster graph {\em F048A} is the unique association scheme with scheme graph having relation-distribution diagram as in Figure~\ref{fig:foster48}.
\end{proposition}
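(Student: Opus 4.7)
The plan is to mimic the quotient-scheme strategy used in Proposition~\ref{prop:dodecahedron}, reducing to the previously established uniqueness of the Nauru scheme. The Foster graph F048A is a double cover of the Nauru graph, so one expects a relation of valency~$1$ to appear in Figure~\ref{fig:foster48}; call this relation $R_k$. Then $R_0\cup R_k$ is an equivalence relation with classes of size~$2$, and we may form the corresponding quotient association scheme as described in Section~\ref{sec:covers}.

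Reading off the intersection numbers from Figure~\ref{fig:foster48} and fusing the pairs of relations that are identified under this equivalence relation, one verifies directly that the relation-distribution diagram of the resulting quotient scheme, taken with respect to the scheme graph inherited from $R_1$, coincides with the diagram of the Nauru graph in Figure~\ref{fig:nauru}. Proposition~\ref{prop:nauru} then identifies the quotient with the association scheme of the Nauru graph, and consequently the scheme graph of $R_1$ is realized as a connected double cover of the Nauru graph on $48$ vertices.

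It remains to show that F048A is the unique such double cover compatible with Figure~\ref{fig:foster48}. By the voltage-graph description of Section~\ref{sec:covers}, every double cover of the Nauru graph arises as the derived graph of a $\mathbb{Z}_2$-voltage assignment, taken up to switching and automorphisms of Nauru. Because the Nauru graph is bipartite, both the trivial voltage class and the all-ones voltage class yield disconnected covers (two disjoint copies of Nauru), and are therefore ruled out by the connectivity of $R_1$. Since Nauru is $2$-arc-transitive, its automorphism group acts with few orbits on the remaining voltage classes, reducing the problem to a finite check; the compatibility constraints with Figure~\ref{fig:foster48}, in particular the girth and the distance-layer sizes, then single out the class that yields F048A. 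Once the scheme graph is identified with F048A, the remaining relations of the scheme follow from the intersection numbers.

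The principal obstacle is this voltage-enumeration step: although arc-transitivity of Nauru guarantees that only finitely many equivalence classes need to be inspected, carrying out the enumeration cleanly and ruling out all non-F048A connected double covers without a lengthy case analysis is the delicate part. A natural alternative would be to quotient instead by a relation of valency~$2$ (whose existence reflects that F048A is also a triple cover of the M\"obius--Kantor graph) and invoke Proposition~\ref{prop:mk}, at the cost of a triple-cover enumeration; either route leads to essentially the same finite cover-enumeration problem.
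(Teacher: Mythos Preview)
Your overall strategy is exactly the paper's: pass to the quotient by the valency-$1$ relation, invoke Proposition~\ref{prop:nauru} to identify the quotient with the Nauru scheme, and then classify the $\mathbb{Z}_2$-voltage assignments on the Nauru graph. Where you stop short is the step you yourself flag as ``the principal obstacle'': you propose to enumerate voltage classes up to automorphism and filter by girth and layer sizes, but you do not carry this out, and as stated it would indeed be a case analysis.

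The paper bypasses any enumeration by extracting two direct constraints from Figure~\ref{fig:foster48}. First, the quotient relation $S_5$ of valency~$2$ splits in the cover into a distance-$4$ relation $R_5$ and a distance-$6$ relation $R_{11}$; hence for each pair $(x,y)\in S_5$ the three walks of length~$4$ from $x$ to $y$ must all lift to the same endpoints, i.e.\ carry the same total voltage. After normalizing voltages to~$0$ on a chosen spanning tree of $GP(12,5)$, this forces specific ``outer'' edges $(i^*,(i+5)^*)$ to have voltage~$0$. Second, the cover has girth~$8$ (read off from Figure~\ref{fig:foster48}), so every $6$-cycle of the Nauru graph must have total voltage~$1$; running through the $6$-cycles $i,\,i+1,\,(i+1)^*,\,(i+6)^*,\,(i-1)^*,\,i-1,\,i$ then fixes all remaining edge voltages. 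These two observations determine the voltage assignment uniquely and give the derived graph $GP(24,5)=\text{F048A}$ with no residual case analysis. So the missing ingredient in your proposal is not a different method but precisely these two combinatorial constraints that make the voltage determination immediate.
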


\begin{proof} Let $\G$ be the scheme graph of an association scheme $(X,\mathcal{R})$, with relation-distribution diagram as in Figure~\ref{fig:foster48}. Because the valency of $R_{12}$ is $1$, it follows that this association scheme has a quotient scheme, $(V,\mathcal{S})$ say, with a scheme graph having relation-distribution diagram as in Figure~\ref{fig:nauru} (and let us number the relations of $(V,\mathcal{S})$ as in that figure). By Proposition~\ref{prop:nauru}, this quotient scheme must therefore be the association scheme of the Nauru graph. Thus, $\G$ is a $2$-cover of the Nauru graph, and it can be constructed from a voltage graph with group $\mathbb{Z}_2$; see Section~\ref{sec:covers}. We will next show that there is essentially one way to do this. In order to do this, we will use the description of the Nauru graph as a generalized Petersen graph $GP(12,5)$. This graph has vertices $i$ and $i^*$ with $i \in \mathbb{Z}_{12}$, where $i$ has neighbors $i-1,i^*$, and $i+1$, whereas $i^*$ has neighbors $(i-5)^*,i$, and $(i+5)^*$. Without loss of generality, we may put voltage $0$ on the edges of a spanning tree of $GP(12,5)$. The spanning tree we will use has edges $\{i,i-1\}$ for $i \neq 0$ and $\{i,i^*\}$ for all $i$.

Next, we will focus on relation $S_5$ of the quotient scheme. It splits into relations $R_5$ and $R_{11}$ in the cover scheme $(X,\mathcal{R})$, where we note that $R_5$ is among the distance-$4$ relations, whereas $R_{11}$ is among the distance-$6$ relations. Because the three walks of length $4$ between two vertices $x$ and $y$ with $(x,y)\in S_5$ should give rise to three walks of length $4$ between two vertices $x'=(x,g)$ and $y'=(y,h)$ with $(x',y')\in R_5$ (for some $g,h \in \mathbb{Z}_2$), it follows that these three walks should have the same voltage. Here the voltage of a walk is the sum of voltages over the edges in the walk. In particular, $(6,10)\in S_5$, with one of the walks between $6$ and $10$ having voltage $0$ (being part of the spanning tree), which implies that also the other two walks should have voltage $0$. This implies that $(6^*,11^*)$ and $(5^*,10^*)$ have voltage $0$. Similarly, $(4^*,9^*),(3^*,8^*),(2^*,7^*),(1^*,6^*),$ and $(0^*,5^*)$ have voltage $0$.

Finally, we observe that because in the cover graph $\G$ there are no $6$-cycles, the voltage of a $6$-cycle in the Nauru graph should be $1$ (where similar as before, the voltage of a cycle is the sum of voltages of its edges). This observation determines the voltages of all remaining edges, as one can easily see. In particular, note that every $6$-cycle consists of consecutive adjacent vertices $i,i+1,(i+1)^*,(i+6)^*,(i-1)^*,i-1,i$ for some $i \in \mathbb{Z}_{12}$, from which it follows that the voltages of the edges $(0,11),(7^*,0^*),(8^*,1^*),(9^*,2^*),(10^*,3^*)$, and $(11^*,4^*)$ must be $1$. The obtained derived graph is the generalized Petersen graph $GP(24,5)$, that is, the Foster graph F048A. Also here, the other relations of the association scheme follow from this.
\end{proof}

We note that this result is confirmed by considering the subscheme on one of the bipartite halves and the computational classification of association schemes with $24$ vertices by Hanaki and Miyamoto \cite{MH,MHsite}, and by observing that such a subscheme determines the entire scheme because the girth of the scheme graph is $8$.

\subsection{A putative fission scheme for the Coxeter graph}\label{ssec:coxeter}

The Coxeter graph is the unique distance-regular graph with intersection array $\{3,2,2,1;1,1,1,2\}$ \cite[Thm.~12.3.1]{bcn89}. It is also known as the Foster graph F028A \cite{RCMD}. In the following, we will show that it is impossible to fission the distance-$3$ relation in the corresponding association scheme. The bipartite double of such a putative fission scheme occurs as one of the cases in the proof of Theorem~\ref{thm: classification}.

\begin{figure}[h!]
\begin{center}
\includegraphics[scale=0.60]{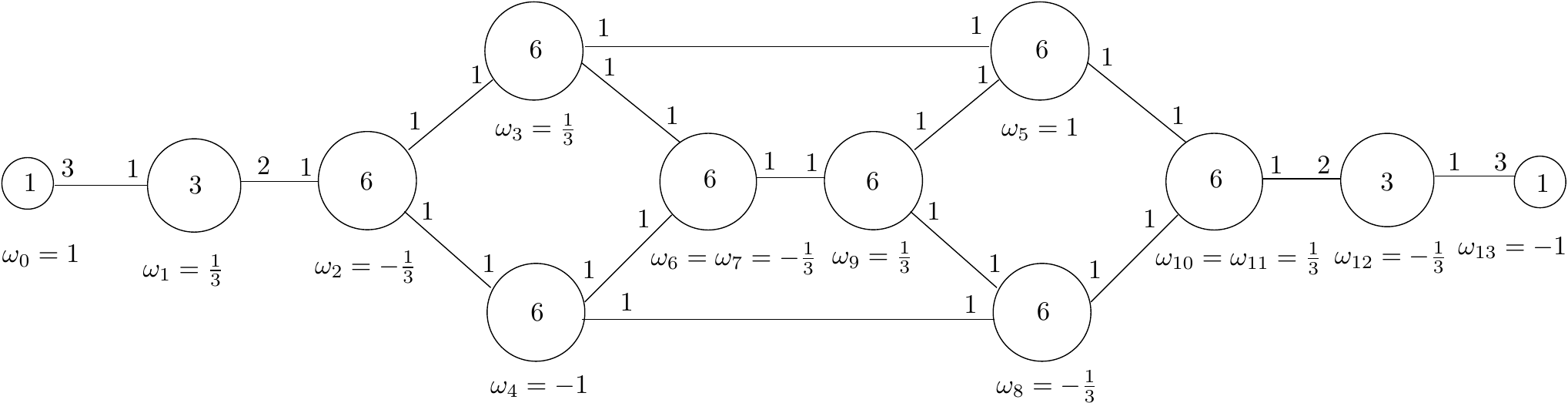}
\caption{Relation-distribution diagram of the bipartite double of a putative fission scheme of the Coxeter graph}
\label{fig:doublecoxeter}
\end{center}
\end{figure}

\begin{figure}[h!]
\begin{center}
\includegraphics[scale=0.65]{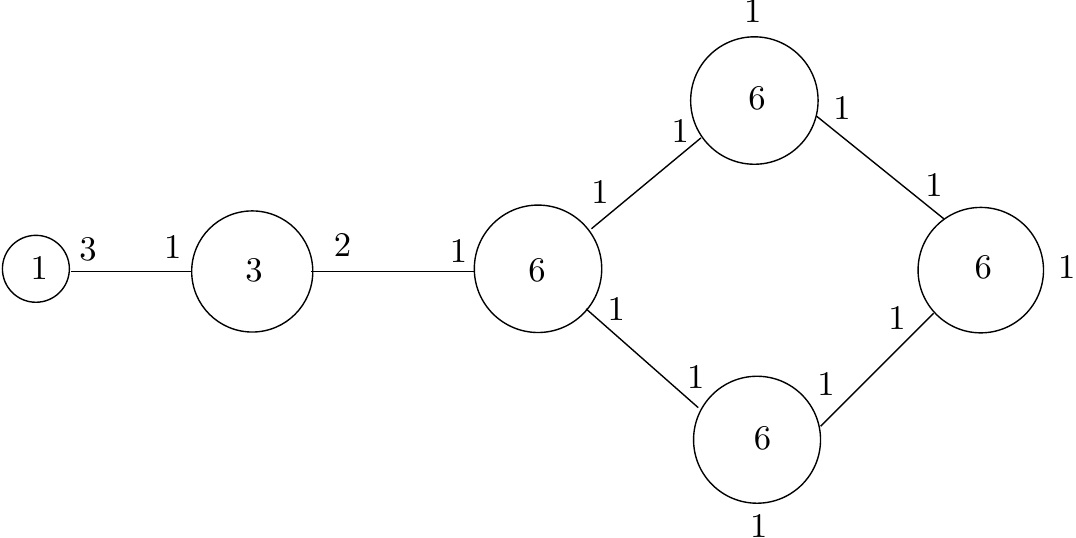}
\caption{Relation-distribution diagram of a putative fission scheme of the Coxeter graph}
\label{fig:fissioncoxeter}
\end{center}
\end{figure}

\begin{proposition}\label{prop:noncoxeter} There is no association scheme with scheme graph having\break relation-distribution diagram as in Figure~\ref{fig:doublecoxeter} or Figure~\ref{fig:fissioncoxeter}.
\end{proposition}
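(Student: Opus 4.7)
My plan is to extract the cosines of the multiplicity-three minimal idempotent directly from the proposed relation-distribution diagrams and then derive a contradiction, arithmetic or structural. Since the Coxeter graph is the unique distance-regular graph with intersection array $\{3,2,2,1;1,1,1,2\}$, any putative fission scheme must have the Coxeter graph as its scheme graph $R_1$, and hence must have eigenvalues $3,2,-1,-1\pm\sqrt{2}$ on $R_1$. A multiplicity-three minimal idempotent in the fission must therefore be a ``piece'' of one of the eigenspaces of multiplicity $6, 7,$ or $8$ of the Coxeter graph itself.

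For Figure~\ref{fig:fissioncoxeter} I would first read off the valencies $k_i$ and the intersection numbers $p^i_{1j}$ from the diagram. For each eigenvalue $\theta$ of $R_1$, the recurrence \eqref{eq: eigenvector} together with the initial values \eqref{eq: smallcosines} determines the cosines $\omega_h$ step by step around the diagram. At each branching point of the diagram --- where a relation has $R_1$-neighbors distributed over several relations --- a second expression for the same cosine appears, producing a linear consistency condition. I expect the contradiction to arise either at such a branching point, or from plugging the resulting $\omega_h$'s into \eqref{eq: multcos}: some candidate multiplicity $m(\theta)=n/\sum_i k_i\omega_i^2$ should fail to be a positive integer, or the multiplicities should fail to sum to $n=28$. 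As a structural cross-check, I would apply Yamazaki's Lemma~\ref{lem: cherry} with $x,z$ at distance $2$ and two neighbors $z_3,z_4$ of $z$ at distance $3$ from $x$ lying in the two fission parts of $\Gamma_3$; the resulting relation $R$ with $p^1_{3R},p^1_{4R}\neq 0$ and $R\cap\Gamma_3=\emptyset$ lies in a very short list of candidates that one can rule out by inspection of the diagram.

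For Figure~\ref{fig:doublecoxeter} I would imitate the argument of Proposition~\ref{prop:dodecahedron}. The diagram contains a matching relation of valency $1$, so its union with $R_0$ is an equivalence relation. The quotient scheme with respect to this equivalence relation then has the Coxeter graph as its scheme graph $R_1$ and relation-distribution diagram precisely that of Figure~\ref{fig:fissioncoxeter}. Consequently the non-existence proved for the fission scheme propagates at once to the bipartite double case, so the two parts collapse into a single computation.

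The main obstacle I expect is managing the cosine recurrence cleanly around the branching structure of the diagram, and identifying the specific eigenvalue whose multiplicity breaks. If the numerical route becomes unwieldy, my fallback is the Yamazaki-lemma argument sketched above, since the hypothesis $R\cap\Gamma_3=\emptyset$ restricts the possibilities to the distance-$0,1,2$ and distance-$4$ relations, which can be eliminated one by one by checking the edges drawn in the relation-distribution diagram.
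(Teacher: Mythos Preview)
Your reduction of Figure~\ref{fig:doublecoxeter} to Figure~\ref{fig:fissioncoxeter} via the valency-$1$ relation and the resulting quotient scheme is correct and is exactly what the paper does.

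For Figure~\ref{fig:fissioncoxeter} your plan diverges from the paper's and carries a real risk of not closing. First, the proposition says nothing about a multiplicity-three idempotent; that is ambient context in Section~\ref{sec:schemesk=m=3}, not a hypothesis here, so your opening paragraph is a red herring. Second, fissioning the distance-$3$ relation of the Coxeter scheme does not create any new eigenvalue of $A_1$: the $6\times 6$ matrix $L_1$ attached to the rank-$6$ diagram necessarily has a repeated eigenvalue, and for that eigenvalue the cosine recurrence you describe does \emph{not} determine a unique sequence --- the ``consistency conditions at branching points'' are satisfied by every eigenvector of $L_1^{\top}$, so no contradiction is forced there. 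Whether \eqref{eq: multcos} then produces a non-integral multiplicity is not obvious a priori; the paper's choice of a structural argument here, in contrast to Proposition~\ref{prop:non3covermk} where $L_1$ has simple spectrum and the multiplicity check does the job, suggests the eigenvalue bookkeeping alone may well be feasible. Your Yamazaki fallback only excludes $R_5\subseteq\Gamma_2$; it still allows $R_5$ among the distance-$4$ relations or among the two distance-$3$ classes themselves, so further diagram-chasing would be required before it could conclude.

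The paper's argument is shorter and structural: fuse the two distance-$3$ classes to recover the metric scheme with array $\{3,2,2,1;1,1,1,2\}$, hence, by uniqueness, the Coxeter graph itself. Then fix a vertex $x$ and observe that the induced subgraph on $\Gamma_2(x)\cup\Gamma_3(x)$ is the disjoint union of two $9$-cycles; a direct inspection shows that no partition of the twelve vertices of $\Gamma_3(x)$ into two blocks of size six can realise the intersection numbers of Figure~\ref{fig:fissioncoxeter}. I would redirect your effort to this argument rather than the speculative numerical route.
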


\begin{proof} It is easy to see that a scheme of Figure~\ref{fig:doublecoxeter} must be a double cover of a scheme of Figure~\ref{fig:fissioncoxeter}. If we fuse the relations at distance $3$ in the latter, we obtain a scheme of a distance-regular graph with intersection array $\{3,2,2,1;1,1,1,2\}$. It is known that there is a unique such distance-regular graph, the Coxeter graph. Thus, the scheme graph is the Coxeter graph. Now fix a vertex in the Coxeter graph. The induced graph on the set of vertices at distance $2$ and $3$ is the disjoint union of two $9$-cycles. It is easy to see that this makes it impossible to partition the vertices at distance $3$ into two sets of size $6$ with the intersection numbers as in Figure~\ref{fig:fissioncoxeter}. Thus, no such association schemes exist.
\end{proof}

\subsection{A putative $3$-cover of the scheme of the M\"{o}bius-Kantor graph}\label{ssec:3covermk}
Another case that appears in the proof of Theorem~\ref{thm: classification} is that of a putative $3$-cover of the M\"{o}bius-Kantor graph, with relation-distribution diagram as in Figure~\ref{fig:3covermk}. Here we will show that the related association scheme does not exist. From the intersection matrix $L_1$, that is defined by $(L_1)_{ij}=p^i_{1j}$, and which follows from the relation-distribution diagram, one can compute the eigenmatrix $P$ (see \eqref{eq: P and Q}) of the association scheme because in this case $L_1$ has no repeated eigenvalues. From this, all other intersection numbers, multiplicities, and Krein parameters can be computed; see for example \cite[p.~46]{bcn89}. It turns out that some intersection numbers, such as $p^6_{88}$, and several Krein parameters are negative. Moreover, some multiplicities are not integral. Our proof will avoid these computations though.

\begin{figure}[h!]
\begin{center}
\includegraphics[scale=0.65]{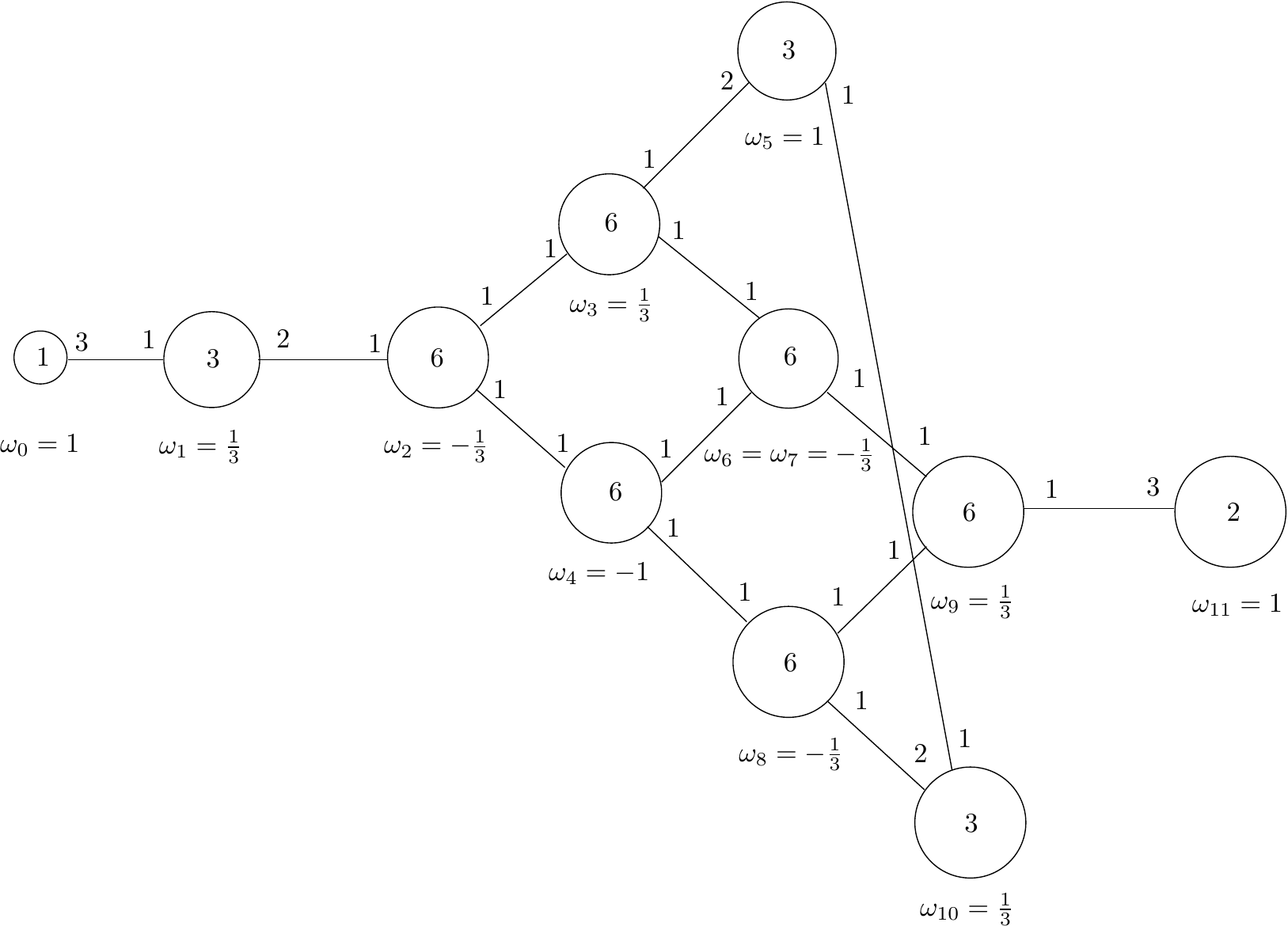}
\caption{Relation-distribution diagram for a putative $3$-cover of the M\"{o}bius-Kantor graph}
\label{fig:3covermk}
\end{center}
\end{figure}

\begin{proposition}\label{prop:non3covermk} There is no association scheme with scheme graph having\break relation-distribution diagram as in Figure~\ref{fig:3covermk}.
\end{proposition}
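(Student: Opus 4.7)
The plan is to mirror the voltage-graph strategy used in the proof of Proposition~\ref{prop:foster48}, but now aiming at a contradiction rather than a construction. Let $(X,\mathcal{R})$ be a putative association scheme with relation-distribution diagram as in Figure~\ref{fig:3covermk}, and let $\Gamma$ be its $R_1$-scheme graph.

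First, I would identify in Figure~\ref{fig:3covermk} a relation of valency two whose union with $R_0$ is an equivalence relation with classes of size three. The induced $K_3$ on each fiber should be forced by the intersection numbers along that ``branch'' of the diagram. Taking the quotient by this equivalence relation yields an association scheme whose $R_1$-scheme graph has relation-distribution diagram exactly as in Figure~\ref{fig:mk}, so by Proposition~\ref{prop:mk} the quotient is the M\"{o}bius-Kantor association scheme. Hence $\Gamma$ is a $3$-cover of the M\"{o}bius-Kantor graph and, by Section~\ref{sec:covers}, is the derived graph of some $\mathbb{Z}_3$-voltage assignment on it.

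Next, I would fix a spanning tree of the M\"{o}bius-Kantor graph and put voltage $0$ on its edges, leaving only the $24-16+1=9$ non-tree edges to be assigned voltages in $\mathbb{Z}_3$. The relation-distribution diagram imposes two kinds of constraints on these voltages: (a) whenever Figure~\ref{fig:3covermk} prescribes the number of walks of a given length between two cover vertices in a fixed relation, the corresponding base walks must split into voltage classes with exactly those cardinalities, which forces equal-length walks between the same base endpoints to share a common voltage whenever the diagram allows only one lift; and (b) short cycles of the M\"{o}bius-Kantor graph whose lifts would create cycles of a length forbidden by the diagram must have nonzero voltage. These two families of conditions, together with the rigidity of the $6$-cycle structure of the M\"{o}bius-Kantor graph, should overdetermine the voltage assignment.

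The main obstacle, and the crux of the argument, is to show that these constraints are actually incompatible. I would focus on a small collection of $6$-cycles through a chosen vertex of the M\"{o}bius-Kantor graph and compare the voltages forced on their shared edges: either two different values are forced on a single non-tree edge, or some $6$-cycle is forced to have voltage $0$ and therefore lifts to a short cycle in $\Gamma$ that is ruled out by the diagram. Either outcome yields the claimed non-existence. I expect the decisive contradiction to surface near the deepest intersection numbers in the diagram, which in a purely numerical approach produce the negative $p^6_{88}$, the negative Krein parameters, and the non-integer multiplicities mentioned by the authors; the voltage-graph argument sketched above should deliver the conclusion cleanly, without invoking any of those numerical computations.
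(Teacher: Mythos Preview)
Your strategy is fundamentally different from the paper's, and the paper's is far more direct. The authors simply observe that the intersection matrix $L_1$ read off from Figure~\ref{fig:3covermk} has the simple eigenvalue $0$, so by \eqref{eq: eigenvector} the cosine sequence for eigenvalue $0$ is uniquely determined; substituting it into \eqref{eq: multcos} yields the would-be multiplicity $48/4.5$, which is not an integer. That is the entire argument --- precisely one of the ``non-integer multiplicities'' you allude to in your final sentence, used not as a side remark but as the proof itself.

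Your voltage-graph outline is plausible in its set-up: the valency-two relation together with $R_0$ does give fibers of size three, and the quotient parameters do match the M\"{o}bius-Kantor scheme, so Proposition~\ref{prop:mk} applies and $\Gamma$ would indeed be a $\mathbb{Z}_3$-cover. But the proposal has a genuine gap exactly where you label it ``the crux'': you never exhibit an actual incompatibility among the voltage constraints. Statements like ``should overdetermine'' and ``I expect the decisive contradiction to surface'' are promissory, not demonstrative. With nine non-tree edges carrying $\mathbb{Z}_3$-voltages, the constraint families you list (equal voltages on parallel walks forced by unique lifts, nonzero voltages on short cycles) are not self-evidently inconsistent; one has to carry out a concrete case analysis, as the paper does for F048A in Proposition~\ref{prop:foster48}, and you have not done so. Until that step is executed you have only reduced the problem to a finite search, not solved it. Given that the multiplicity argument settles the proposition in two lines, completing the voltage-graph route would be substantial extra work for the same conclusion.
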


\begin{proof} From the relation-distribution diagram and \eqref{eq: eigenvector}, it follows easily that the only possible cosine sequence for eigenvalue $0$ is $(\psi_0,\dots,\psi_6,\psi_8,\dots,\psi_{11})=(1,0,-\frac12,0,0,\break 0,\frac12,0,0,0,-\frac12)$. Alternatively, this is the only normalized eigenvector of $L_1$ for eigenvalue $0$.
By \eqref{eq: multcos}, the corresponding multiplicity equals $48/4.5$, which is not integral, so such an association scheme cannot exist.
\end{proof}

We note that this result is confirmed by considering the putative subscheme on one of the bipartite halves and the computational classification of association schemes with $24$ vertices by Hanaki and Miyamoto \cite{MH,MHsite}.

\section{Association schemes with a valency and multiplicity three}\label{sec:schemesk=m=3}

In this section we shall determine the association schemes having a connected relation with valency three and a minimal scheme idempotent with multiplicity three. In order to find this classification, we first need another lemma on a certain configuration of vertices and the corresponding cosines.

\begin{lemma} \label{lemma: multi 3} Let $(X,\mathcal{R})$ be a partially metric association scheme with a connected scheme graph $\G$ with valency $k=3$ and $a_1=0$. Let $E$ be a minimal scheme idempotent with multiplicity three and let $\theta$ be the corresponding eigenvalue of $\G$ on $E$.
Let $u_1$ and $u_2$ be two adjacent vertices in $\G$, let $v_1, v_2$ be the other two neighbors of $u_1$, and $v_3, v_4$ be the other two neighbors of $u_2$. Fix another vertex $x$, and let $\psi_i=\w_{xu_i}$ $(i=1,2)$ and $\phi_i=\w_{xv_i}$ $(i=1,2,3,4)$ be the respective cosines corresponding to $E$. Then $$\phi_3,\phi_4=\frac12(\theta \psi_2 -\psi_1 \pm(\phi_1-\phi_2)).$$
\end{lemma}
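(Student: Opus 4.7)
The plan is to exploit the multiplicity $m=3$ of $E$ through its spherical representation: factor $E=UU^{\top}$ with $U$ an $n\times 3$ matrix and rescale so that the row $\hat{x}$ corresponding to each vertex $x$ is a unit vector in $\mathbb{R}^{3}$ with $\hat{x}\cdot\hat{y}=\w_{xy}$. The eigenequation $AE=\theta E$ then translates into the local identity $\sum_{z\sim y}\hat{z}=\theta\hat{y}$ for each vertex $y$. Applying this at $y=u_{1}$ and $y=u_{2}$ and pairing with $\hat{x}$ gives $\phi_{1}+\phi_{2}+\psi_{2}=\theta\psi_{1}$ and $\phi_{3}+\phi_{4}+\psi_{1}=\theta\psi_{2}$. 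The second of these is exactly the ``sum'' half $\phi_{3}+\phi_{4}=\theta\psi_{2}-\psi_{1}$ of the claim, so the remaining task is to prove $\phi_{3}-\phi_{4}=\pm(\phi_{1}-\phi_{2})$.

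This is where $m=3$ enters essentially. I would project the vector identity at $u_{1}$ into the $2$-dimensional plane $\hat{u}_{1}^{\perp}\subset\mathbb{R}^{3}$. Because $u_{2},v_{1},v_{2}$ are three distinct neighbors of $u_{1}$, all in $R_{1}$, the projections of $\hat{u}_{2},\hat{v}_{1},\hat{v}_{2}$ onto $\hat{u}_{1}^{\perp}$ share the common positive length $\sqrt{1-\w_{1}^{2}}$ (positive because $\w_{1}=\theta/k$ and $\theta\neq\pm k$, which $m=3$ forces via Theorem~\ref{thm: godsil} and the remarks after it), and their sum is $0$ because $\theta\hat{u}_{1}$ projects to $0$. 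Three nonzero vectors of equal length in a $2$-plane summing to $0$ are necessarily at mutual angles of $120^{\circ}$, so the projections of $\hat{v}_{1}$ and $\hat{v}_{2}$ are the reflections of one another across the line spanned by the projection of $\hat{u}_{2}$. This forces $\hat{v}_{1}-\hat{v}_{2}$ (which already lies in $\hat{u}_{1}^{\perp}$) to be orthogonal to $\hat{u}_{2}$ as well, so it lies in the $1$-dimensional subspace $\mathrm{span}(\hat{u}_{1},\hat{u}_{2})^{\perp}$ of $\mathbb{R}^{3}$.

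The symmetric argument at $u_{2}$ puts $\hat{v}_{3}-\hat{v}_{4}$ in the same $1$-dimensional subspace. Since $a_{1}=0$, the neighbors $v_{1},v_{2}$ of $u_{1}$ are non-adjacent, hence at graph distance exactly $2$, and partial metricity places $(v_{1},v_{2})\in R_{2}$; the same holds for $(v_{3},v_{4})$. Therefore $\|\hat{v}_{1}-\hat{v}_{2}\|^{2}=\|\hat{v}_{3}-\hat{v}_{4}\|^{2}=2-2\w_{2}$, which forces $\hat{v}_{3}-\hat{v}_{4}=\pm(\hat{v}_{1}-\hat{v}_{2})$. Pairing with $\hat{x}$ yields the missing identity $\phi_{3}-\phi_{4}=\pm(\phi_{1}-\phi_{2})$, and combining with the sum identity gives the two expressions for $\phi_{3},\phi_{4}$.

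The one point that needs care is the $120^{\circ}$ step: one has to confirm that the three projections are pairwise distinct, so that the configuration is non-degenerate. But equality of any two of $\hat{u}_{2},\hat{v}_{1},\hat{v}_{2}$ combined with the unit-length and sum constraints would (as a short computation shows) again force $\theta=\pm k$, which is excluded; so the picture is non-degenerate and the argument goes through.
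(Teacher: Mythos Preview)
Your argument is correct and follows the same overall strategy as the paper: represent $E=UU^{\top}$, work with unit vectors $\hat{y}\in\mathbb{R}^{3}$, show that $\hat{v}_{1}-\hat{v}_{2}$ and $\hat{v}_{3}-\hat{v}_{4}$ both lie in the $1$-dimensional space $\mathrm{span}(\hat{u}_{1},\hat{u}_{2})^{\perp}$, compare their lengths, and combine with the eigenequation $AE=\theta E$.

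The only real difference is how you establish $\hat{v}_{1}-\hat{v}_{2}\perp\hat{u}_{2}$. You project into $\hat{u}_{1}^{\perp}$ and invoke the $120^{\circ}$ configuration of three equal-length vectors summing to zero. The paper takes a shorter route: since $a_{1}=0$ and the scheme is partially metric, $v_{1}$ and $v_{2}$ are both at distance $2$ from $u_{2}$, hence $(u_{2},v_{1}),(u_{2},v_{2})\in R_{2}$ and $\langle\hat{u}_{2},\hat{v}_{1}\rangle=\w_{2}=\langle\hat{u}_{2},\hat{v}_{2}\rangle$, giving the orthogonality in one line. Your $120^{\circ}$ picture is a pleasant geometric way to see the same fact, but it is more work and requires the extra non-degeneracy check; the paper's direct use of the cosine $\w_{2}$ (which you already need anyway for the length identity $\|\hat{v}_{1}-\hat{v}_{2}\|^{2}=2-2\w_{2}$) avoids that detour entirely.
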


\begin{proof} Because $k=3$ and the multiplicity $m$ equals $3$, it follows that $\theta\neq \pm3$.
In order to calculate the cosines corresponding to $E$, we will use the following well-known approach. Because $E$ has rank $3$, it can be written as $E=UU^{\top}$, where $U$ is an $n \times 3$ matrix with columns forming an orthonormal basis of the eigenspace of $E$ for its eigenvalue $1$, with $n$ being the number of vertices of $\Gamma$. For every vertex $u$ of $\Gamma$ we denote by $\hat{u}$ the row of $U$ that corresponds to $u$, normalized to length $1$. Now the inner product $\langle\hat{u},\hat{v}\rangle$ is equal to $\frac n3 E_{uv}=\w_{uv}$.

Now, let $L$ be the orthogonal complement (in $\mathbb{R}^3$) of the subspace spanned by $\hat{u_1}$ and $\hat{u_2}$. The latter two vectors are linearly independent because $\theta \neq \pm 3$ and $u_1$ is adjacent to $u_2$, and hence $L$ is $1$-dimensional. From the equations $\langle\hat{u_1},\hat{v_1}\rangle=\w_1=\langle\hat{u_1},\hat{v_2}\rangle$ and
$\langle\hat{u_2},\hat{v_1}\rangle=\w_2=\langle\hat{u_2},\hat{v_2}\rangle$, it follows that $\hat{v_1}-\hat{v_2}$ is in $L$.
Similarly, $\hat{v_3}-\hat{v_4}$ is in $L$. It is also easily shown that $\|\hat{v_1}-\hat{v_2}\|^2=2-2\w_2=\|\hat{v_3}-\hat{v_4}\|^2$, and hence it follows that $\hat{v_3}-\hat{v_4}=\pm(\hat{v_1}-\hat{v_2})$. By taking the inner product with $\hat{x}$, it thus follows that $\phi_3-\phi_4=\pm(\phi_1-\phi_2)$.

On the other hand, from $AE=\theta E$ (evaluated at $(u_2,x)$), we find that
$\phi_3+\phi_4=\theta \psi_2-\psi_1$. By combining the two obtained equations we now find the required equation for $\phi_3$ and $\phi_4$.
\end{proof}

By Lemma~\ref{prop: cubic 1-walk}, an association scheme is partially metric if it has a connected relation with valency three.
We will now show that if in addition it has a minimal scheme idempotent with multiplicity three, then the corresponding eigenvalue is $\pm1$ or $\pm\sqrt5$.

\begin{proposition} \label{thm: multi 3} Let $(X,\mathcal{R})$ be an association scheme with rank $d+1\geq3$ and with a connected scheme graph $\G$ with valency $k=3$. Let $E$ be a minimal scheme idempotent with multiplicity three and let $\theta$ be the corresponding eigenvalue of $\G$ on $E$. Then $\theta\in\{\pm 1,\pm\sqrt5\}$. Moreover, if $\theta = \pm \sqrt{5}$, then $c_3=1$.
\end{proposition}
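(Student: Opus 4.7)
The approach is to combine the rigidity provided by Lemma~\ref{lemma: multi 3} with the light-tail machinery of Theorem~\ref{thm: light tail 2}. By Lemma~\ref{prop: cubic 1-walk}, the scheme is $2$-partially metric with respect to $R_1$; since $d+1\geq 3$, $\G$ is not $K_4$, hence $a_1=0$, $c_1=1$, $b_1=2$, and \eqref{eq: smallcosines} gives $\w_0=1,\ \w_1=\theta/3,\ \w_2=(\theta^2-3)/6$. With $a_1=0$, Theorem~\ref{thm: light tail 2} simplifies to $m\geq k=3$ with equality iff $E$ is a light tail, so our $E$ is a light tail, and \eqref{eq: etatheta} then yields $\eta=(\theta^2-3)/2$.

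The heart of the proof is to fix an edge $\{u_1,u_2\}$ of $\G$, let $v_1,v_2$ be the other neighbors of $u_1$ and $v_3,v_4$ the other neighbors of $u_2$ (six distinct vertices, since $a_1=0$), and apply Lemma~\ref{lemma: multi 3} with the choice $x:=v_1$. Then $\psi_1=\w_1$, $\psi_2=\w_2$, $\phi_1=1$, $\phi_2=\w_2$, and the lemma produces
\[
\w_{v_1v_3}+\w_{v_1v_4}=\theta\w_2-\w_1=\tfrac{1}{6}\theta(\theta^2-5),\qquad \w_{v_1v_3}-\w_{v_1v_4}=\pm(1-\w_2)=\pm\tfrac{1}{6}(9-\theta^2).
\]
A case analysis on the relations of $(v_1,v_3)$ and $(v_1,v_4)$---each cosine lies in $\{\w_1,\w_2\}\cup\{\text{cosines of distance-}\geq 3 \text{ relations}\}$, and they cannot coincide, as otherwise $\theta^2=9$---together with the polynomial identities forced by the two equations then pins $\theta$ down. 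The $4$-cycle case (one cosine equal to $\w_1$) gives $(\theta\pm 1)(\theta^2-9)=0$, hence $\theta=\pm 1$; the configuration $\{\w_2,\text{dist-}\!\geq\! 3\}$ yields $(\theta+1)^2(\theta-3)=0$ or $(\theta-3)(\theta^2-5)=0$, hence $\theta\in\{-1,\pm\sqrt 5\}$; the configuration $\{\w_1,\w_2\}$ has no common solution with $|\theta|<3$. The principal obstacle is the remaining case in which both cosines come from (necessarily distinct) distance-$\geq 3$ relations; this is handled by iterating Lemma~\ref{lemma: multi 3} on the adjacent edge $\{u_2,v_3\}$ (again with $x=v_1$) to obtain a second pair of constraints among further cosines, and combining these with \eqref{eq: eigenvector} at $h=2$ to again force $\theta^2\in\{1,5\}$.

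For the claim $c_3=1$ when $\theta^2=5$, substituting $\theta^2=5$ gives $\w_2=\tfrac{1}{3}$ and reduces the two displayed equations to $\w_{v_1v_3}+\w_{v_1v_4}=0$ and $|\w_{v_1v_3}-\w_{v_1v_4}|=\tfrac{2}{3}$, so $\{\w_{v_1v_3},\w_{v_1v_4}\}=\{\tfrac{1}{3},-\tfrac{1}{3}\}$. Hence exactly one of $v_3,v_4$ (say $v_3$) lies at distance $2$ from $v_1$ and the other, $v_4$, lies in a distance-$3$ relation of cosine $-\tfrac{1}{3}$. If $c_3(v_1,v_4)\geq 2$, an additional common neighbor of $v_1,v_4$ besides $u_2$ would yield another distance-$2$ vertex of $v_1$ adjacent to $v_4$, and applying Yamazaki's Lemma~\ref{lem: cherry} to this configuration produces a relation whose cosine is inconsistent with the derived sum/difference equations, a contradiction. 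Thus $c_3=1$.
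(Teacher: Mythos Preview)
Your setup is correct and matches the paper: Lemma~\ref{prop: cubic 1-walk} gives $a_1=0$, the cosines $\w_0,\w_1,\w_2$ are as in \eqref{eq: smallcosines}, and applying Lemma~\ref{lemma: multi 3} with $x=v_1$ yields precisely the paper's expressions \eqref{eq: w3}--\eqref{eq: w4}. Your treatment of the degenerate cases (one of the two cosines equals $\w_1$ or $\w_2$) is also fine.

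The genuine gap is in the ``principal obstacle'' case, where both $v_3,v_4$ lie at distance~$3$ from $v_1$ (equivalently, $c_2=1$). You propose to iterate Lemma~\ref{lemma: multi 3} along $\{u_2,v_3\}$ and then ``combine with \eqref{eq: eigenvector} at $h=2$''. But iterating the lemma only \emph{computes} further cosines $\w_5,\w_6,\dots$ as polynomials in~$\theta$; it does not produce a relation among already-known quantities. Equation \eqref{eq: eigenvector} at $h=2$ is just $\theta\w_2=\w_1+\w_{v_1v_3}+\w_{v_1v_4}$, which you have already used as your sum identity. No new constraint on~$\theta$ arises this way. What closes the argument in the paper is exactly the part of the light-tail machinery you compute but never deploy: the associated matrix $F$ satisfies $AF=\eta F$, and its cosines obey $\rho_i=\tfrac32\w_i^2-\tfrac12$ by \eqref{eq: cosinesrhoomega}. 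After using Yamazaki's Lemma~\ref{lem: cherry} (which needs $c_3=1$) to force one of the distance-$4$ relations to be adjacent to $R_4$, the relation $\eta\rho_h=\sum_\ell p^h_{1\ell}\rho_\ell$ at that vertex gives a \emph{second}, independent polynomial identity in~$\theta$, and this is what collapses to $(\theta^2-5)(\theta^2-1)^2(\theta^2-9)^2=0$. Without invoking $F$, your iteration never terminates in a constraint.

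Your argument for $c_3=1$ when $\theta^2=5$ also has a gap. From $\{\w_{v_1v_3},\w_{v_1v_4}\}=\{\tfrac13,-\tfrac13\}$ you conclude that the vertex with cosine $\tfrac13$ lies at distance~$2$. But $\w_2=\tfrac13$ and, by your own computation, one of the distance-$3$ cosines also equals $\tfrac13$ when $\theta=\sqrt5$; so cosine $\tfrac13$ does not determine the distance. The paper instead first shows $c_2=1$ (since neither cosine equals $\w_1$ when $\theta\neq\pm1$), then computes the four distance-$4$ cosines $\w_5,\dots,\w_8$ via a second application of Lemma~\ref{lemma: multi 3} and checks directly that none of them equals $\w_2$; that is what yields $c_3=1$. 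Your appeal to Lemma~\ref{lem: cherry} here is not the right tool and the claimed ``inconsistency'' is not substantiated.
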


\begin{proof}
Again, because $k=3$ and the multiplicity $m$ equals $3$, it is clear that $\theta\neq \pm3$. By Lemma~\ref{prop: cubic 1-walk}, $(X,\mathcal{R})$ is partially metric with respect to $R_1$, the relation with scheme graph $\G$, and it follows that $a_1=0$. Without loss of generality, we may assume that $\G$ is bipartite. Indeed, if $\Gamma$ is not bipartite, then we can consider the bipartite double of $\Gamma$ which is the scheme graph of $BD(X,\mathcal{R})$, which is also partially metric because the odd-girth of $\G$ is at least $5$ (see Section~\ref{sec:bipartitedouble}), and which has minimal scheme idempotents with multiplicity three for corresponding eigenvalues $\theta$ and $-\theta$. So we assume that $\Gamma$ is bipartite.

We will now first show that $c_2=1$ or $\theta\in\{-1,1\}$.
In order to show this claim, let $xz_1z_2$ be a path of length $2$ in $\G$, i.e., $(x,z_1)\in R_1$, $(z_1,z_2) \in R_1$, and $(x,z_2)\in R_2$. Let $\w_0,\ldots, \w_d$ be the cosines corresponding to $E$. As $(X,\mathcal{R})$ is partially metric, $\w_{xz_1}=\w_{z_1z_2}=\w_1=\frac{1}{3}\theta$ and $\w_{xz_2}=\w_2=\frac{1}{6}(\theta^2-3)$ by \eqref{eq: smallcosines}.

Let $z_3$ and $z_4$ be the two neighbors of $z_2$ different from $z_1$, with $(x,z_3)\in R_3$ and $(x,z_4)\in R_4$ for some relations $R_3,R_4\in \mathcal{R}$. Note that $R_1, R_3$, and $R_4$ are not necessarily distinct. However, by calculating the cosines $\w_3$ and $\w_4$ in terms of $\theta$, we will show that $R_3$ and $R_4$ are distinct from $R_1$ if $\theta\neq\pm1$, which will prove that in this case $c_2(x,z_2)=1$. Indeed, by applying Lemma~\ref{lemma: multi 3} to the adjacent vertices $z_1,z_2$ and their neighbors, we find that $\w_3,\w_4=\frac12(\theta \w_2 -\w_1 \pm(\w_0-\w_2))$. Working this out in terms of $\theta$ gives (without loss of generality) that
\begin{equation}\label{eq: w3}
\w_3=\tfrac 12(\theta \w_2-\w_1 +1-\w_2)=\tfrac{1}{12}(\theta^3-\theta^2-5\theta+9)
\end{equation}
and
\begin{equation}\label{eq: w4}
\w_4=\tfrac12(\theta \w_2-\w_1-1+\w_2)=\tfrac{1}{12}(\theta^3+\theta^2-5\theta-9).
\end{equation}
Now it easily follows that if $\w_3=\w_1$ or $\w_4=\w_1$, then $\theta\in\{-1,1\}$, which shows the claim. Note also that $\w_3\neq \w_4$, and hence $R_3\neq R_4$, because $\theta\neq\pm3$.

Next, let us assume that $c_2=1$. We next claim that also $c_3=1$ or $\theta\in\{-1,1\}$.
In order to prove this claim, we consider the neighbors of $z_3$ and $z_4$. Note that $z_3$ and $z_4$ are at distance $3$ from $x$ as $c_2=1$. Let $z_5$ and $z_6$  be the two neighbors of $z_3$ different from $z_2$, and similarly let $z_7$ and $z_8$ be the two neighbors of $z_4$ different from $z_2$. We assume that $(x,z_i)\in R_{i}$ for some relations $R_{i}$, for $i=5,6,7,8$. Our aim is to show that $R_i \neq R_2$ for $i=5,6,7,8$ if $\theta \neq \pm 1$. In order to do this, we again calculate the corresponding cosines in terms of $\theta$, using Lemma~\ref{lemma: multi 3}. Indeed, this gives that $\w_5,\w_6=\frac12(\theta \w_3 -\w_2 \pm(\w_1-\w_4))$. Together with \eqref{eq: w3} and \eqref{eq: w4}, it follows (without loss of generality) that
\begin{equation}\label{eq: w5}
\w_5=\tfrac12(\theta \w_3-\w_2+\w_1-\w_4)=\tfrac{1}{24}(\theta^4-2\theta^3-8\theta^2+18\theta+15)
\end{equation}
and
\begin{equation*}\label{eq: w6}
\w_6=\tfrac12(\theta \w_3-\w_2-\w_1+\w_4)=\tfrac{1}{24}(\theta^4-6\theta^2-3).
\end{equation*}
Similarly, we obtain that
\begin{equation*}\label{eq: w6-1}
\w_7=\tfrac12(\theta \w_4-\w_2+\w_1-\w_3)=\tfrac{1}{24}(\theta^4-6\theta^2-3)
\end{equation*}
and
\begin{equation*}\label{eq: w5-1}
\w_8=\tfrac12(\theta \w_4-\w_2-\w_1+\w_3)=\tfrac{1}{24}(\theta^4+2\theta^3-8\theta^2-18\theta+15).
\end{equation*}
Again, it easily follows that $\w_i \neq \w_2$ for $i=5,6,7,8$ if $\theta \neq \pm 1$, which indeed shows the claim that $c_3=1$ or $\theta\in\{-1,1\}$.

We now first observe that $E$ is a light tail according to Theorem~\ref{thm: light tail 2} because $a_1=0$ and $m=k=3$, and hence we have equality in \eqref{eq: light tail 1}.
Let $F$ be the associated matrix for $E$ for corresponding eigenvalue $\eta$, i.e., $AF=\eta F$, and let $\rho_0\ldots,\rho_d$ be the cosines corresponding to $F$. Then $\eta= \frac 12 \theta^2-\frac 32$ by \eqref{eq: etatheta} and $\rho_i=\frac 32 \w_i^2-\frac 12$ for $i=0,1,\dots,d$ by \eqref{eq: cosinesrhoomega}.

Assume now that $c_3=1$. We continue with the above configuration of vertices. By Yamazaki's lemma~\ref{lem: cherry} we then know that $p^5_{14}\neq0$ or $p^6_{14}\neq0$.
We will first show that $p^5_{14}=0$. Indeed, assume that $z_5$ has a neighbor $s_4$ such that $(x,s_4)\in R_4$. Besides $z_3$ and $s_4$, $z_5$ has one more neighbor, $z_9$ say, with $(x,z_9)\in R_9$ for some relation $R_9$. From $AE=\theta E$, we obtain that $\w_3+\w_4+\w_9=\theta \w_5$, which implies that
\begin{equation}\label{eq: w9}
\w_9=\tfrac{1}{24}(\theta^5-2\theta^4-12\theta^3+18\theta^2+35\theta).
\end{equation}
From $AF=\eta F$, we obtain that $\rho_3+\rho_4+\rho_9=\eta \rho_5$. By substituting
$\eta= \frac 12 \theta^2-\frac 32$ and $\rho_i=\frac 32 \w_i^2-\frac 12$ for $i=0,1,\dots,d$ into this equation, and then \eqref{eq: w3}, \eqref{eq: w4}, \eqref{eq: w5}, and \eqref{eq: w9}, we obtain that\footnote{We used Mathematica to work this out}
\begin{equation*}\label{eq: case1}
(\theta^2-4\theta-1)(\theta+1)^3(\theta-3)^3(\theta+3)^2=0.
\end{equation*}
Thus, $\theta=2 \pm \sqrt{5}$. However, because these eigenvalues are not integral, as algebraic conjugates they must both be eigenvalues of $\G$. But $2+\sqrt{5}>3$, and so it cannot be an eigenvalue. Hence, by contradiction, $p^5_{14}=0$, and it follows that $p^6_{14}\neq0$.

Finally, we consider the neighbors of $z_6$. Similar as above, we now obtain that
\begin{equation*}\label{eq: case1111}
(\theta^2-5)(\theta-1)^2(\theta+1)^2(\theta-3)^2(\theta+3)^2=0.
\end{equation*}
Thus, $\theta=\pm\sqrt5$.
\end{proof}

We are now ready to classify the association schemes having a connected relation with valency three and a minimal scheme idempotent with multiplicity three.

\begin{theorem} \label{thm: classification} Let $(X,\mathcal{R})$ be an association scheme with rank $d+1$, a connected scheme graph $\G$ with valency three, and a minimal scheme idempotent with multiplicity three. Then one of the following holds:
\begin{enumerate}[{\em (i)}]
  \item $d=1$ and $\G$ is the tetrahedron (the complete graph on $4$ vertices)
  \item $d=3$ and $\G$ is the cube,
  \item $d=5$ and $\G$ is the M\"{o}bius-Kantor graph,
  \item $d=6$ and $\G$ is the Nauru graph,
  \item $d=11$ and $\G$ is the Foster graph {\em F048A},
  \item $d=5$ and $\G$ is the dodecahedron,
  \item $d=11$ and $\G$ is the bipartite double of the dodecahedron.
\end{enumerate}
Moreover, the association scheme $(X,\mathcal{R})$ is uniquely determined by $\Gamma$. In all cases, except {\em (vii)}, this is the association scheme that is generated\footnote{That is, the association scheme of minimal rank that has $\G$ as a scheme graph} by $\G$. In case {\em (vii)}, the association scheme is the bipartite double scheme of the association scheme of case {\em (vi)}.
\end{theorem}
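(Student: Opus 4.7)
The plan is to use Proposition~\ref{thm: multi 3} as the backbone: it already forces the corresponding eigenvalue $\theta$ to lie in $\{\pm 1, \pm\sqrt 5\}$, and in the case $\theta = \pm\sqrt 5$ its proof yields the additional information $c_2 = c_3 = 1$. The rank-two case $d = 1$ is immediate, giving $\G = K_4$ and hence case (i). From now on assume $d \geq 2$, so that $a_1 = 0$ by Lemma~\ref{prop: cubic 1-walk}. If $\G$ is not bipartite then, by Section~\ref{sec:bipartitedouble}, the bipartite double scheme $BD(X,\mathcal R)$ is still partially metric and inherits a multiplicity-three idempotent; classifying the bipartite case and then recognising $\G$ as a suitable quotient is therefore enough. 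Throughout, Theorem~\ref{thm: light tail 2} (applied with $k = m = 3$ and $a_1 = 0$) shows that $E$ is a light tail with associated eigenvalue $\eta = (\theta^2 - 3)/2$, and this provides an indispensable second linear constraint on intersection numbers via \eqref{eq: cosinesrhoomega}.

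First suppose $\theta = \sqrt 5$ (the case $\theta = -\sqrt 5$ being handled by swapping idempotents). From \eqref{eq: smallcosines} one reads $\w_1 = \sqrt 5/3$ and $\w_2 = 1/9$, and Lemma~\ref{lemma: multi 3} together with $c_2 = c_3 = 1$ propagates the cosines uniquely as one moves outward. The light-tail identities pin down the remaining intersection numbers, and the multiplicity equation \eqref{eq: multcos} bounds the diameter. One ends up with the diagram of Figure~\ref{fig:dodecahedron}; Proposition~\ref{prop:dodecahedron} then identifies the scheme as the bipartite double of the dodecahedron (case (vii)), and its non-bipartite quotient is the dodecahedron itself (case (vi)).

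For $\theta = \pm 1$ the cosines are $\w_1 = \pm 1/3$, $\w_2 = -2/9$, and the argument splits on $c_2$. The value $c_2 = 3$ forces $\G = K_{3,3}$, whose multiplicities $\{1,4,1\}$ are incompatible with $m = 3$. The value $c_2 = 2$, combined with the light-tail equation $AF = -F$ and \eqref{eq: multcos}, leaves only the cube, yielding case (ii). The remaining (and hardest) case is $c_2 = 1$: successive applications of Yamazaki's Lemma~\ref{lem: cherry} at distances $i = 2, 3, \dots$, combined with the cosine and light-tail computations familiar from the proof of Proposition~\ref{thm: multi 3}, progressively constrain the relation-distribution diagram. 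The diagrams that survive are exactly Figures~\ref{fig:mk}, \ref{fig:nauru}, and \ref{fig:foster48}, identified by Propositions~\ref{prop:mk}, \ref{prop:nauru}, and \ref{prop:foster48} as the M\"obius--Kantor, Nauru, and Foster F048A graphs, cases (iii)--(v). Competing branches that would realise Figures~\ref{fig:doublecoxeter} or \ref{fig:3covermk} are killed by Propositions~\ref{prop:noncoxeter} and~\ref{prop:non3covermk} respectively.

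The main obstacle is the enumeration in the case $\theta = \pm 1$ with $c_2 = 1$: at each step, a distance-$i$ graph may split into several scheme relations, and each such splitting must be tested against the combined constraints from Yamazaki's lemma, the light-tail identities, the symmetry $k_i p^i_{1j} = k_j p^j_{1i}$, and integrality of multiplicities via \eqref{eq: multcos}. Section~\ref{sec:uniqueness} has been arranged to do exactly this closing work: the non-existence propositions eliminate the unwanted branches, while the uniqueness propositions convert each surviving diagram into a specific scheme. The final uniqueness claim in the theorem then follows at once, since in (i)--(vi) the entire scheme is generated by $\G$, while (vii) is by construction the bipartite double of (vi).
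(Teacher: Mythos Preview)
Your outline follows the paper's approach closely, but there are a couple of slips and one genuine omission.

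First, two arithmetic errors: with $k=3$, $a_1=0$, $b_1=2$, equation~\eqref{eq: smallcosines} gives $\omega_2=(\theta^2-3)/6$, so $\omega_2=1/3$ when $\theta=\sqrt5$ and $\omega_2=-1/3$ when $\theta=\pm1$, not $1/9$ and $-2/9$.

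Second, in the $\theta=\sqrt5$ case you say Lemma~\ref{lemma: multi 3} ``propagates the cosines uniquely'' and the light-tail identities then ``pin down the remaining intersection numbers''. Neither holds as stated. Lemma~\ref{lemma: multi 3} always leaves a $\pm$ ambiguity, and since $\rho_i=\tfrac32\omega_i^2-\tfrac12$, the light tail cannot separate relations whose cosines differ only in sign---and here several cosines equal $\pm1/3$. The paper resolves this branching with Yamazaki's Lemma~\ref{lem: cherry} (to force $R_6=R_7$, to rule out $p^5_{13}=1$, etc.), exactly as you propose to do later for $\theta=1$. You need Yamazaki's lemma in the $\sqrt5$ case as well; the light tail alone will not close the diagram. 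Nor is the multiplicity equation~\eqref{eq: multcos} used to bound the diameter: the diagram simply terminates once the intersection numbers are forced.

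Third, your enumeration for $\theta=1$, $c_2=1$ misses a branch. In the girth-$8$ case with $p^5_{13}=2$ there are two sub-cases according to whether $p^5_{1,10}=1$ or $p^5_{1,10}=0$. The first gives Figure~\ref{fig:3covermk} and is disposed of by Proposition~\ref{prop:non3covermk}; but the second yields the partial diagram of Figure~\ref{fig: theta=1-6}, and this is eliminated by a direct path-counting argument inside the proof, not by any proposition in Section~\ref{sec:uniqueness}. So your claim that Section~\ref{sec:uniqueness} ``has been arranged to do exactly this closing work'' leaves that sub-case unhandled.
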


\begin{proof} Let $E=E_j$ be the minimal scheme idempotent with multiplicity $m=3$, for eigenvalue $\theta$.

(1){\em The tetrahedron.}
If the rank of $(X,\mathcal{R})$ is $2$, then $\G$ is the complete graph on $4$ vertices, and we have case (i).

So from now on, we may assume that the rank $d+1$ is at least $3$, and hence Proposition~\ref{thm: multi 3} applies, and $\theta \in \{\pm1,\pm\sqrt{5}\}$. As in the proof of Proposition~\ref{thm: multi 3}, it follows that $(X,\mathcal{R})$ is partially metric with respect to $\G$ and $a_1=0$. Likewise, by considering the bipartite double scheme, we may first restrict ourselves to determining the bipartite graphs $\G$ and corresponding association schemes, but then we also have to check afterwards which association schemes could have $(X,\mathcal{R})$ as their bipartite double.

(2){\em The cube.}
So we assume that $\Gamma$ is bipartite. First, note that if $c_2=3$, then $\G$ must be the complete bipartite graph $K_{3,3}$, but the corresponding scheme does not have an idempotent with multiplicity three. Secondly, if $c_2=2$, then it is easily found that $\G$ must be the cube. The corresponding rank $4$ scheme indeed has two minimal scheme idempotents with multiplicity three (for $\theta=\pm 1$). The only scheme having this scheme as its bipartite double is the scheme of the complete graph on $4$ vertices, and hence we obtain cases (i) and (ii).

For the remaining cases, we may assume that $c_2=1$.

(3){\em Eigenvalue $\sqrt{5}$; the dodecahedron.}
We first consider the case $\theta=\pm \sqrt5$; without loss of generality we assume that $\theta=\sqrt{5}$. For this case, we consider the configuration of vertices and the notation in the proof of Proposition~\ref{thm: multi 3}. We thus find the following cosines: $\w_0=1$, $\w_1=\frac{\sqrt5}{3}$, $\w_2=\w_3=\frac{1}{3}$, $\w_4=-\frac{1}{3}$, $\w_5=\frac{\sqrt5}{3}$, $\w_6=\w_7=-\frac{1}{3}$ and $\w_8=-\frac{\sqrt5}{3}$. It also follows that $c_3=1$, $p^5_{14}=0$, and similarly $p^8_{13}=0$.

We claim now that Figure~\ref{fig:dodecahedron} shows the relation-distribution of $\Gamma$. Indeed, the distribution up to distance $3$ is clear. In order to show the remainder of the distribution, we first observe that because $c_3=1$, it follows that $R_6=R_7$ by Yamazaki's lemma~\ref{lem: cherry}. Because $p^6_{13}=p^3_{16}k_3/k_6=6/k_6=p^4_{16}k_4/k_6=p^6_{14}$ and $p^6_{13}+p^6_{14}\leq 3$, it follows that $p^6_{13}=p^6_{14}=1$. Let $R_9$ be the (remaining) relation , at distance $5$, such that $p^6_{19}=1$. Note that this is not the relation $R_9$ in the proof of Proposition~\ref{thm: multi 3}. Then it follows from \eqref{eq: eigenvector} (with $P_{j1}=\theta=\sqrt{5}$) that $\w_9=\sqrt{5}\w_6-\w_3-\w_4=-\frac{\sqrt5}{3}$.
Next, we will show that $p^5_{13}=2$. Indeed, suppose that $p^5_{13}=1$. Then by Yamazaki's lemma~\ref{lem: cherry}, it follows that $p^9_{15}>0$. However, by applying Lemma~\ref{lemma: multi 3} to the adjacent vertices $z_3$ and $z_5$ and their neighbors, the cosines\footnote{By the cosine of a vertex $z$ we mean the cosine $\w_{xz}$} for the remaining two neighbors of $z_5$ are
$\frac12(\theta \w_5 -\w_3 \pm(\w_2-\w_6))$, which equal $\frac13$ and $1$, and one of these should be $\w_9$, which is a contradiction. These cosines also show that $p^5_{13} \neq 3$, and hence it follows indeed that $p^5_{13}=2$. Similarly, we obtain that $p^8_{14}=2$. We now have the distribution up to distance $4$, and observe that $c_4=2$. The latter implies that $c_5(x,z)\geq 2$ for all $z \in \G_5(x)$. Using this, the remainder of the distribution follows (as shown in Figure~\ref{fig:dodecahedron}). By Proposition~\ref{prop:dodecahedron}, we obtain the bipartite double scheme of the metric association scheme of the dodecahedron, and we have case (vii). Moreover, the only association scheme with this bipartite double is the scheme of the dodecahedron, which gives case (vi).

(4){\em Eigenvalue $1$.} Next, we consider the case $\theta=\pm 1$; again without loss of generality we assume that $\theta=1$. Recall that we assumed that $c_2=1$. We again consider the configuration of vertices as in the proof of Proposition~\ref{thm: multi 3}, up to $z_8$, and find that $\w_1=\frac13,\w_2=-\frac13,\w_3=\frac13,\w_4=-1,\w_5=1$, and $\w_6=\w_7=\w_8=-\frac13$ (from \eqref{eq: w3}-\eqref{eq: w5} and further). Thus, it is possible that $R_6,R_7,R_8$, and $R_2$ are not distinct. We thus have the partial relation-distribution diagram as in Figure~\ref{fig:theta1basic}, where $p^3_{12}=1$ or $2$.

\begin{figure}[h!]
\begin{center}
\includegraphics[scale=0.65]{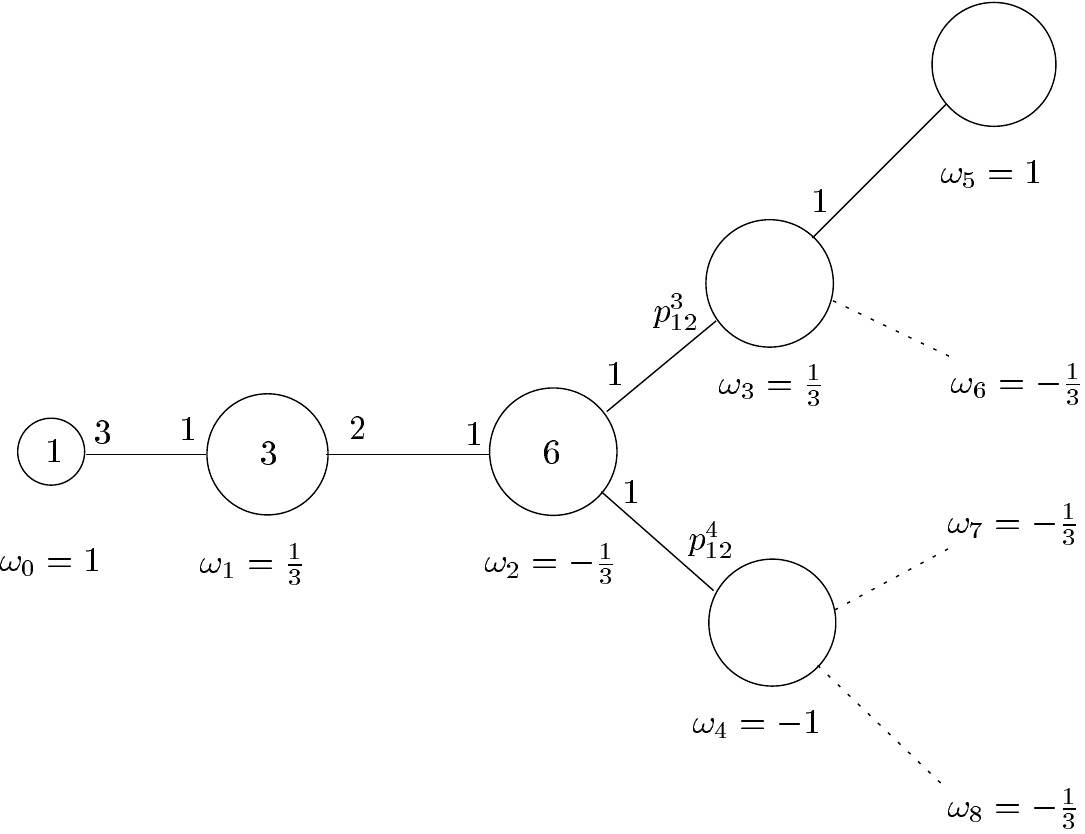}
\caption{Partial relation-distribution diagram for $\theta=1$}
\label{fig:theta1basic}
\end{center}
\end{figure}

(4.1){\em The M\"{o}bius-Kantor graph.} Let us first consider the case that $p^3_{12}=2$, i.e., $R_6=R_2$. Then $k_3=3$.
We claim that in this case, we only have the M\"{o}bius-Kantor graph, with relation-distribution diagram as in Figure~\ref{fig:mk}.

Indeed, if $p^4_{12}=1$, then $R_7$ or $R_8$ should be equal to $R_5$ by Yamazaki's lemma~\ref{lem: cherry}, but $\w_5\neq \w_7$ and $\w_5\neq \w_8$, so we have a contradiction. If $p^4_{12}=2$, then without loss of generality $R_7=R_2$. Now $c_3=2$, hence $c_4(u,v)\geq 2$ for all $u$ and $v$ at distance $4$. In particular, it follows that $p^8_{14}\geq 2$. Because $k_4=3$ and $p^4_{18}=1$, it follows that $p^8_{14}=3$ and $k_8=1$. But now $\theta\w_8=3\w_4$ by \eqref{eq: eigenvector}, and again we have a contradiction. Thus, $p^4_{12}=3$ and hence $k_4=2$. Now $c_3(u,v) \geq 2$ for all $u$ and $v$ at distance $3$, which again implies that $c_4(u,v)\geq 2$ for all $u$ and $v$ at distance $4$. In particular, we obtain that $p^5_{13} \geq 2$, and it then follows that $p^5_{13}=3$ with $k_5=1$. We therefore indeed obtain the relation-distribution diagram of Figure~\ref{fig:mk}. By Proposition~\ref{prop:mk}, we obtain the association scheme of the  M\"{o}bius-Kantor graph and we have case (iii). We also note that the obtained scheme is not the bipartite double of any scheme.

(4.2){\em The Nauru graph.} Next, we consider the case that $p^3_{12}=1$, hence $R_6 \neq R_2$, and $k_3=6$.
By Yamazaki's lemma~\ref{lem: cherry}, we now obtain (without loss of generality) that $R_6=R_7$, and hence that $p^4_{12}\leq 2$. Let us first consider the case that $p^4_{12}=2$. Then $R_8=R_2$, $k_4=3$, which also implies that $k_6=3$, $p^4_{16}=p^6_{14}=1$, and $p^6_{13}=2$. We now claim that in this case, we only have the Nauru graph, with relation-distribution diagram as in Figure~\ref{fig:nauru}.

To show this claim, we observe that it follows from Yamazaki's lemma~\ref{lem: cherry} that $p^5_{13} > 1$. If $p^5_{13}=2$, then $k_5=3$ and there is a relation, $R_9$ say, among the ``distance $5$-relations'', such that $p^5_{19}=1$. Then $\w_9=\theta \w_5-2\w_3=\frac13$ by \eqref{eq: eigenvector}.
Because $c_4(u,v) \geq 2$ for all $u$ and $v$ at distance $4$, it follows that $c_5(u,v)\geq 2$ for all $u$ and $v$ at distance $5$, hence $p^9_{15}=3$ and $k_9=1$. However, now $\theta\w_9=3\w_5$ by \eqref{eq: eigenvector}, which gives a contradiction. Thus, $p^5_{13}=3$, and hence $k_5=2$, and we indeed obtain the relation-distribution diagram as in Figure~\ref{fig:nauru}. By Proposition~\ref{prop:nauru}, we thus obtain the association scheme of the Nauru graph and we have case (iv). Again, we note that this scheme is not the bipartite double of any scheme.

(4.3){\em Girth $8$.} What remains is the case that both $p^3_{12}=1$ and $p^4_{12}=1$. In this case it follows without loss of generality from Yamazaki's lemma~\ref{lem: cherry} that $R_6=R_7$, so that the girth of $\G$ is $8$. We will now first show that the partial relation-distribution diagram is as in Figure~\ref{fig:girth8basic}. Indeed, in this case $z_6$ has at least one neighbor with cosine $\w_3=\frac13$ and at least one neighbor with cosine $\w_4=-1$. By \eqref{eq: eigenvector}, the missing neighbor $z_9$ has cosine $\w_9=\theta \w_6-\w_3-\w_4=\frac13$. Thus, $p^6_{14}=1$, and because $6 \leq p^4_{16}6=p^4_{16}k_4=p^6_{14}k_6=k_6\leq p^6_{13}k_6=p^3_{16}k_3=6$, it follows that $p^4_{16}=1,p^6_{13}=1$, and $k_6=6$. Thus, $R_6 \neq R_8$ and the missing neighbor $z_9$ of $z_6$ is at distance $5$ from $x$, say $(x,z_9) \in R_9$. In the above, we showed that $\w_9=\frac13$.
By applying Lemma~\ref{lemma: multi 3} to the adjacent vertices $z_4$ and $z_8$, and their neighbors, we find that the two remaining neighbors of $z_8$ have cosines $\frac12 (\theta \w_8-\w_4\pm(\w_2-\w_6))=\frac13$. Thus, $p^8_{14}=1$ and $k_8=6$. By Yamazaki's lemma~\ref{lem: cherry}, it now follows that $p^8_{19} \geq 1$. By \eqref{eq: eigenvector}, $z_9$ should now also have a neighbor with cosine $1$, so $p^9_{16}=p^9_{18}=1$, hence $k_9=6$ and $p^8_{19}=1$. The missing neighbor $z_{10}$ of $z_8$ must be at distance $5$ from $x$, say $(x,z_{10})\in R_{10}$. Thus, we find the partial relation-distribution diagram shown in Figure~\ref{fig:girth8basic}. Next, we will distinguish three cases according to the value of $p^5_{13}$.

\begin{figure}
\begin{center}
\includegraphics[scale=0.65]{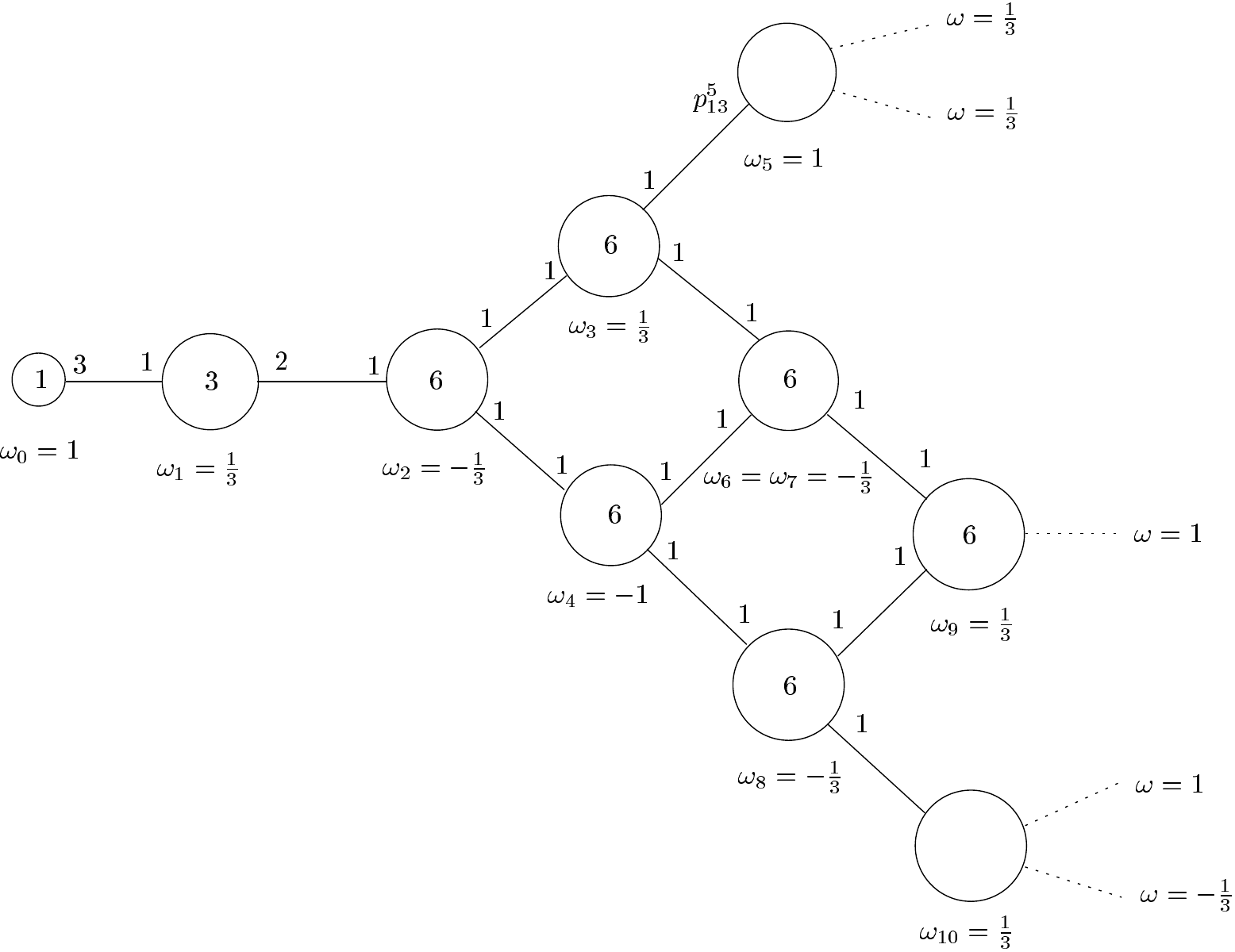}
\caption{Partial relation-distribution diagram for girth $8$}
\label{fig:girth8basic}
\end{center}
\end{figure}

(4.3.1){\em A putative scheme related to the Coxeter graph.} First, if $p^5_{13}=1$, then $k_5=6$. By Yamazaki's lemma~\ref{lem: cherry}, it follows that $p^9_{15}=1$ and hence $p^5_{19}=1$. Let $z_{11}$ be the missing neighbor of $z_5$, with $(x,z_{11})\in R_{11}$, say. Thus, $\w_{11}=\frac13$, but it is clear that $R_{11}\neq R_9$. By Lemma~\ref{lemma: multi 3}, the other two neighbors of $z_{11}$ (i.e., not $z_5$) have cosines $-\frac13$, so $p^{11}_{15}=1$ and $k_{11}=6$. By Yamazaki's lemma~\ref{lem: cherry}, it follows that $R_{10}=R_{11}$, and hence that $p^{10}_{18}=1$. It now easily follows that we obtain the relation-distribution diagram of Figure~\ref{fig:doublecoxeter}. However, by Proposition~\ref{prop:noncoxeter} such a scheme, which is related to the Coxeter graph, does not exist.

(4.3.2){\em A putative $3$-cover of the M\"{o}bius-Kantor graph.} Secondly, if $p^5_{13}=2$, then $k_5=3$, and it follows that $p^9_{15}=p^5_{19}=0$. Let $z_{11}$ be the missing neighbor of $z_9$, with $(x,z_{11})\in R_{11}$, say. Now $\w_{11}=1$. If moreover $p^5_{1,10}=1$, then $k_{10}=3$, $p^{10}_{18}=2$, and $p^{10}_{15}=1$, and it now easily follows that we obtain the relation-distribution diagram of Figure~\ref{fig:3covermk}. However, by Proposition~\ref{prop:non3covermk} such a scheme, whose scheme graph is a putative $3$-cover of the M\"{o}bius-Kantor graph, does not exist. If however $p^5_{1,10}=0$, then we claim that we obtain the partial relation-distribution diagram of Figure~\ref{fig: theta=1-6}.
\begin{figure}
\begin{center}
\includegraphics[scale=0.60]{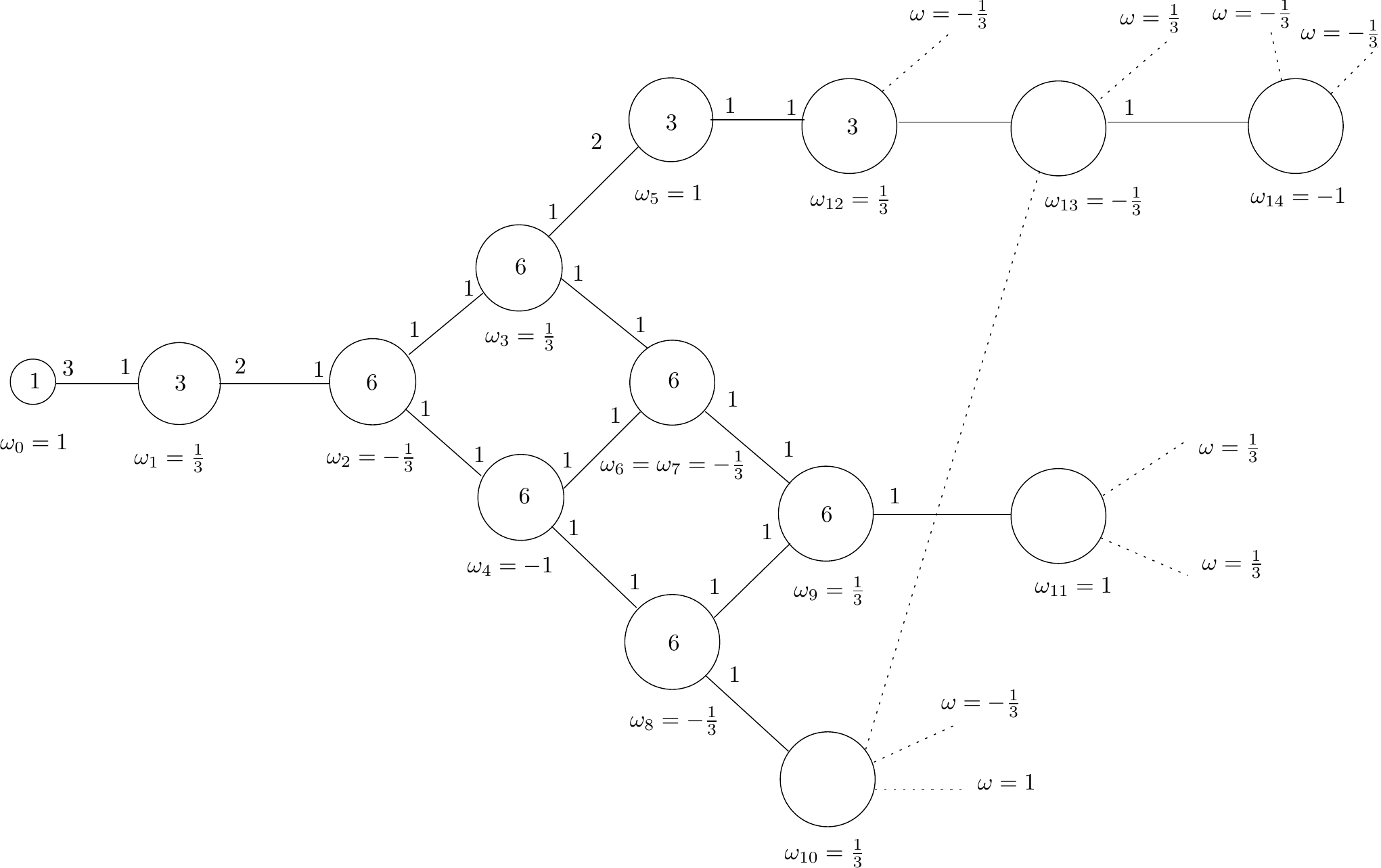}
\caption{Partial relation-distribution diagram for the case $p^5_{13}=2$ and $p^5_{1,10}=0$}
\label{fig: theta=1-6}
\end{center}
\end{figure}
Now, let $z_{12}$ be the missing neighbor of $z_5$, with $(x,z_{12})\in R_{12}$, say. Thus, $\w_{12}=\frac13$. Again, by Lemma~\ref{lemma: multi 3}, the other two neighbors of $z_{12}$ have cosines $-\frac13$, so $p^{12}_{15}=1$ and $k_{12}=3$. Now we easily obtain a relation $R_{13}$ with $\w_{13}=-\frac13$, which is among the distance-$6$ relations, and a relation $R_{14}$ with $\w_{14}=-1$, among the distance-$7$ relations, as in Figure~\ref{fig: theta=1-6}. Let $z_5z_{12}z_{13}z_{14}$ be a path of length $3$, with $(x,z_{i})\in R_{i}$ for $i=5,12,13,14$. Because the girth of $\G$ is $8$, this is the unique path between $z_5$ and $z_{14}$, and $(z_{14},z_5)\in R_3$ or $R_4$. From the partial relation-distribution diagram it follows that $z_5$ has a neighbor $z$ such that $(z_{14},z)\in R_6$. This implies that there are two paths of length $4$ from $z$ to $z_{14}$, one of them being $zz_5z_{12}z_{13}z_{14}$. Because $p^5_{13}=2$, it also follows that $(x,z)\in R_3$. This implies that there must be a path $zv_6v_9v_{11}z_{14}$, with $(x,v_i)\in R_i$ for $i=6,9,11$. This implies that $p^{11}_{1,14}=1$, but this is impossible by Lemma~\ref{lemma: multi 3}. Thus, there is no association scheme with partial relation-distribution diagram of Figure~\ref{fig: theta=1-6}.

(4.3.3){\em The Foster graph} F048A.
Thirdly, and finally, if $p^5_{13}=3$, then $k_5=2$. Again, let $z_{11}$ be the missing neighbor of $z_9$, with $(x,z_{11})\in R_{11}$, say, then $\w_{11}=1$. Let $v_5$ be a vertex at distance $4$ from $z_{11}$ such that $(x,v_5)\in R_5$. Because $\w_5=\w_{11}=1$, it follows that $\hat{v_5}=\hat{x}=\hat{z_{11}}$ (see the proof of Lemma~\ref{lemma: multi 3}), hence $\w_{v_5z_{11}}=1$ and $(v_5,z_{11})\in R_5$. This implies that there are three disjoint paths of length $4$ between $z_{11}$ and $v_5$, and hence $p^{11}_{19}=3$ and $k_{11}=2$. Now $p^{10}_{18}=1$ is impossible by Yamazaki's lemma~\ref{lem: cherry}, hence $p^{10}_{18}=2$ and we obtain the relation-distribution diagram of Figure~\ref{fig:foster48}. By Proposition~\ref{prop:foster48}, it follows that the association scheme is the one with scheme graph the Foster graph F048A, and we obtain case (v). Finally, we observe that this scheme is not the bipartite double of any scheme.
\end{proof}

\section{Partially metric association schemes with a multiplicity three}\label{sec:2pmschemes}
Now we can finally give our main result, the classification of partially metric association schemes with a multiplicity three.

\begin{theorem}\label{thm: multi} Let $(X,\mathcal{R})$ be a partially metric association scheme with rank $d+1$ and a multiplicity three, and let $\G$ be the corresponding scheme graph. Then one of the following holds:
\begin{enumerate}[{\em (i)}]
  \item $d=1$ and $\G$ is the tetrahedron (the complete graph on $4$ vertices),
  \item $d=3$ and $\Gamma$ is the cube,
  \item $d=5$ and $\Gamma$ is the M\"{o}bius-Kantor graph,
  \item $d=6$ and $\Gamma$ is the Nauru graph,
  \item $d=11$ and $\Gamma$ is the Foster graph {\em F048A},
  \item $d=5$ and $\Gamma$ is the dodecahedron,
  \item $d=11$ and $\Gamma$ is the bipartite double of the dodecahedron,
  \item $d=3$ and $\Gamma$ is the icosahedron,
  \item $d=2$ and $\Gamma$ is the octahedron,
  \item $d=2$ and $\Gamma$ is a regular complete $4$-partite graph.
\end{enumerate}
Moreover, the association scheme $(X,\mathcal{R})$ is uniquely determined by $\Gamma$. In all cases, except {\em (vii)}, this is the association scheme that is generated by $\G$. In case {\em (vii)}, the association scheme is the bipartite double scheme of the association scheme of case {\em (vi)}.

\end{theorem}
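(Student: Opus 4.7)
The plan is to reduce the classification to two situations already established in the paper: the case when the scheme graph has valency three, handled by Theorem~\ref{thm: classification}, and the complete multipartite case, handled by the remark following Theorem~\ref{thm: godsil}. I would let $E$ denote a minimal scheme idempotent of multiplicity $m=3$ with corresponding eigenvalue $\theta$ on $\G$, and first observe that $\theta\neq\pm k$ (eigenvalue $k$ always has multiplicity one, and $-k$ has multiplicity one whenever it occurs). If $\G$ is complete multipartite, the remark after Theorem~\ref{thm: godsil} gives directly cases (ix) (the octahedron $K_{2,2,2}$) and (x) (the regular complete $4$-partite graphs); in both, the scheme has rank three, is metric, and is uniquely the scheme generated by $\G$. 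Otherwise Theorem~\ref{thm: godsil} yields $k\leq(m+2)(m-1)/2=5$. The value $k=2$ is excluded because polygon schemes have all nontrivial multiplicities equal to one or two, and $k=3$ is exactly Theorem~\ref{thm: classification}, producing cases (i)--(vii) together with the stated uniqueness.

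It then remains to handle $k\in\{4,5\}$ when $\G$ is not complete multipartite, and to show that only the icosahedron (case (viii)) appears. The strategy is to exploit Theorem~\ref{thm: light tail 2}: substituting $m=3$ and $k\in\{4,5\}$ into \eqref{eq: light tail 1}, and combining with the feasibility constraints $c_1=1$, $a_1+b_1+c_2=k$, the formulas \eqref{eq: smallcosines}, and the integrality of the multiplicity forced by \eqref{eq: multcos}, should cut the candidates for $(\theta,a_1,b_1)$ down to a short list. For each candidate I would check whether equality holds, so that $E$ is a light tail and the auxiliary cosine system \eqref{eq: cosinesrhoomega}--\eqref{eq: etatheta} is activated, and then attempt to grow a relation-distribution diagram using the cosine recursion \eqref{eq: eigenvector} and Yamazaki's Lemma~\ref{lem: cherry}, in the spirit of the proof of Theorem~\ref{thm: classification}. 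At $k=5$ equality in Godsil's bound is attained, and I expect this tightness to force an antipodal double-cover structure over $K_6$ and thereby isolate the icosahedron. At $k=4$ the plan is to show that every remaining candidate either collapses to the octahedron (already captured via (ix)) or fails a numerical or Krein test.

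The hard part will be the enumeration and elimination at $k=4$ and $k=5$, for which I would want an analogue of Lemma~\ref{lemma: multi 3} that does not require valency three. The underlying geometric picture remains available: writing $E=UU^{\top}$ with $U$ an $n\times 3$ matrix whose columns form an orthonormal basis of the eigenspace, the images $\hat x$ of the vertices in $\mathbb{R}^3$ satisfy $\langle\hat x,\hat y\rangle=\w_{xy}$, and any four of them are linearly dependent, which yields rigid cosine identities between a vertex and the neighbours of two of its adjacent vertices. These identities should squeeze the relation-distribution diagrams in the same way as in Section~\ref{sec:schemesk=m=3}. Once the scheme graph is pinned down, uniqueness of the generated association scheme is immediate in case (viii), because the icosahedron is distance-regular and generates its metric scheme, while in case (vii) we refer back to Proposition~\ref{prop:dodecahedron} via Theorem~\ref{thm: classification}. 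Collating the ten situations then completes the classification.
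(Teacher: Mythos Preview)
Your reduction---complete multipartite via the remark after Theorem~\ref{thm: godsil}, then Godsil's bound to get $k\le 5$, $k=2$ excluded, and $k=3$ handed to Theorem~\ref{thm: classification}---matches the paper exactly. The divergence is entirely in how $k\in\{4,5\}$ is handled.

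The paper's treatment of $k\in\{4,5\}$ is far shorter than the enumeration you sketch, because it brings in two external facts you do not mention. First, for $k>3$ one has $a_1>0$ (this is \cite[Lemma~6.7]{CDKP}, though it also drops out of \eqref{eq: light tail 1} with $a_1=0$, which gives $m\ge k$). For $k=5$ the handshaking parity $ka_1$ even then forces $a_1=2$, so $\G$ is locally a pentagon, and \cite[Prop.~1.1.4]{bcn89} identifies $\G$ as the icosahedron immediately---no diagram growing, no cover-of-$K_6$ argument. For $k=4$, $a_1\in\{1,2\}$; when $a_1=2$ the local graph is a $4$-cycle and $\G$ is the octahedron. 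When $a_1=1$ the paper invokes \cite[Prop.~5.2]{CDKP}: the number $-1-b_1/(\theta+1)$ must be an eigenvalue of the local graph (two disjoint edges, eigenvalues $\pm1$), which pins down $\theta=-2$; then equality holds in \eqref{eq: light tail 1}, and \eqref{eq: etatheta} gives $\eta=-\tfrac12$, contradicting algebraic integrality.

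Your plan has a genuine gap precisely at $k=4$, $a_1=1$. Substituting into \eqref{eq: light tail 1} only yields $\theta^2\ge 4$, with equality (hence a light tail) exactly at $\theta=\pm2$. For $2<|\theta|<4$ the bound is strict, $E$ is \emph{not} a light tail, and none of the machinery in \eqref{eq: cosinesrhoomega}--\eqref{eq: etatheta} is available; with $\theta$ unconstrained in an interval, the relation-distribution enumeration you propose cannot get started. You would need something like the local-eigenvalue result above to close this. A smaller point: your uniqueness argument for the icosahedron (``distance-regular, generates its metric scheme'') shows the metric scheme exists but not that no proper refinement does; the paper rules out refinements by noting that Godsil's bound forces every nontrivial multiplicity to be at least $3$, so the multiplicity~$5$ cannot split.
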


\begin{proof}
 If $\Gamma$ is complete multipartite, then $(X,\mathcal{R})$ has rank three, and we can easily see that $\G$ is the octahedron ($K_{2,2,2}$) or a regular complete $4$-partite graph, and we obtain cases (ix) and (x). Now, let us assume that $\Gamma$ is not complete multipartite, with valency $k$.

Let $A$ be the adjacency matrix of $\G$ and let $E$ be the minimal scheme idempotent with multiplicity three for corresponding eigenvalue $\theta$. Because $\Gamma$ is not complete multipartite, Theorem~\ref{thm: godsil} implies that $k \leq 5$. Note also that $a_1<k-1$ because $\G$ is not complete.

If $k=2$, then $\G$ is a cycle, but then the corresponding scheme does not have a multiplicity three. Thus, $k>2$. If $k=3$, then we have one of the cases (i)-(vii) by Theorem~\ref{thm: classification}.
If $k>3$, then $a_1>0$ by \cite[Lemma 6.7]{CDKP}.

We first assume that $k=4$. Then $a_1$ is either $1$ or $2$. If $a_1=1$, then $\Gamma$ is locally a disjoint union of two edges and $b_1=2$. Because $-1-\frac{b_1}{\theta+1}$ is an eigenvalue of every local graph of $\Gamma$ by \cite[Prop.~5.2]{CDKP}, it follows that $\theta=-2$. Now equality holds in \eqref{eq: light tail 1}, and hence $E$ is a light tail. If $\eta$ is the corresponding eigenvalue on the associated matrix $F$ for the light tail $E$, then it follows from \eqref{eq: etatheta} that $\eta=-\frac{1}{2}$.  But this is impossible because every eigenvalue of $\G$ must be an algebraic integer.

If $a_1=2$, then $\G$ is locally a quadrangle and hence it is the octahedron. The octahedron is a complete multipartite graph however, which we excluded in this part of the proof (still it occurs as case (ix), of course).

Finally, we assume that $k=5$. Then $a_1=2$ because $ka_1$ must be even. So $\Gamma$ is locally a pentagon and this shows that $\Gamma$ is the icosahedron (see \cite[Prop.~1.1.4]{bcn89}), which is a distance-regular graph with spectrum $\{5^1,\sqrt{5}^3, -1^5, -\sqrt{5}^3\}$. Theorem~\ref{thm: godsil} implies that every minimal scheme idempotent of $(X,\mathcal{R})$ has multiplicity at least three for corresponding eigenvalue $\theta$ if $\theta \neq \pm k$. This implies that we cannot split the idempotent with multiplicity $5$, which shows that the association scheme $(X,\mathcal{R})$ is also uniquely determined by $\Gamma$ in this final case (viii).
\end{proof}

We note that the bipartite double schemes of the (metric) association schemes of the icosahedron, the octahedron, and the regular complete $4$-partite graphs also have a multiplicity three, but these are not partially metric. Analogous to the case of the dodecahedron, the bipartite double scheme of the icosahedron does have a fusion scheme that is partially metric, but this fusion scheme does not have a multiplicity three. The bipartite double graph of the icosahedron is the incidence graph of a group divisible design with the dual property, see \cite{QDK}. Among the $2$-walk-regular graphs with fixed valency, these have a relatively small number of vertices, see \cite{QKP}. The bipartite double scheme of a regular complete $4$-partite graph is a cover of the cube in the sense that it is imprimitive with the association scheme of the cube as a quotient scheme.

\section{Non-commutative association schemes from covers of the cube}\label{sec:cubecovers}

In this section, we present an infinite family of arc-transitive covers of the cube with an eigenvalue with
multiplicity three. By a similar result as Lemma~\ref{prop: cubic 1-walk} (see \cite[Prop.~3.6]{CDKP}), this provides an infinite family of $2$-walk-regular graphs with a multiplicity three, as we already announced in \cite[p.~2705]{CDKP}.
Moreover, by considering the orbitals of the corresponding automorphism groups, we obtain an
infinite family of non-commutative association schemes with a symmetric relation having a multiplicity three (note that
we are careful not to call this a multiplicity of the scheme). This indicates that the restriction to symmetric association schemes in the earlier sections is not without good reason.

Feng, Kwak, and Wang \cite{FK}, \cite[Ex.~3.1]{FKW} constructed covers of the cube from voltage graphs. We will describe
(and generalize somewhat) these covers $\G$ by their incidence matrix $N$ as follows. Let $n$ and $k\leq n-1$ be such
that $k^2+k+1$ is a multiple of $n$.
Let $C$ be the $n\times n$ permutation matrix corresponding to a cyclic permutation of order $n$. Then we let
$$N =\begin{bmatrix}I & I & I & 0\\I & C & 0 & I \\I & 0 & C^{k+1} & C^k \\0 & I & C^k & C^k\end{bmatrix}.$$

\begin{proposition}\label{prop:cubecovers} Let $n$ and $k\leq n-1$ be such
that $k^2+k+1$ is a multiple of $n$. Then the bipartite graph $\G$ with bipartite incidence matrix $N$ is
arc-transitive and it has eigenvalues $\pm 1$ with multiplicity three.
\end{proposition}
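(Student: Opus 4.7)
The plan is to realise $\G$ as a $\mathbb{Z}_n$-voltage cover of the cube $Q_3$ and to decompose its spectrum via the characters of $\mathbb{Z}_n$. The underlying $4\times 4$ pattern of zero/non-zero blocks in $N$ is exactly the biadjacency matrix of $Q_3$, and each non-zero block $C^r$ prescribes a perfect matching between two fibres of the cover with voltage $r\in\mathbb{Z}_n$. Since all blocks of $N$ lie in the commutative algebra $\mathbb{C}[C]$, the matrix $N$ commutes with $\mathrm{diag}(C,C,C,C)$, whose $\omega^j$-eigenspaces (with $\omega=e^{2\pi i/n}$) give a decomposition of $\mathbb{C}^{4n}$ into $4$-dimensional invariant subspaces for $N$. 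On the $j$-th such subspace $N$ acts as the complex matrix
\begin{equation*}
N_j = \begin{pmatrix} 1 & 1 & 1 & 0 \\ 1 & \alpha & 0 & 1 \\ 1 & 0 & \alpha^{k+1} & \alpha^k \\ 0 & 1 & \alpha^k & \alpha^k \end{pmatrix}, \qquad \alpha=\omega^j,
\end{equation*}
so the spectrum of the bipartite graph $\G$ is the union over $j=0,\dots,n-1$ of $\{\pm\sigma:\sigma \text{ a singular value of } N_j\}$.

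For $j=0$, $N_0$ is the biadjacency matrix of $Q_3$, with singular values $\{3,1,1,1\}$ since $Q_3$ has spectrum $\{\pm 3,\pm 1^{(3)}\}$. This alone contributes the required multiplicity of three to each of $\pm 1$. The remaining task is to show that for every $j\neq 0$, $1$ is not a singular value of $N_j$, that is, $\det(N_jN_j^*-I)\neq 0$. I would compute $M_j:=N_jN_j^*$ explicitly---its diagonal is $(3,3,3,3)$ and its off-diagonal entries are Laurent monomials of the form $1+\alpha^{-r}$ or $\alpha^s+\alpha^{-r}$ (with $r\in\{1,k,k+1\}$)---and expand $\det(M_j-I)$ as a Laurent polynomial in $\alpha$ and $\beta:=\alpha^k$. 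The divisibility hypothesis $n\mid k^2+k+1$ is equivalent to $\alpha\beta^{k+1}=1$ (i.e.\ $\alpha^{k^2+k+1}=1$) and moreover forces $\gcd(k,n)=\gcd(k+1,n)=1$. The plan is to use this single relation to collapse the determinant into a product of explicit factors, each non-zero for $\alpha\neq 1$. This reduction is the main technical obstacle: the computation is routine in principle but the bookkeeping is substantial.

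For arc-transitivity, the plan is to exhibit an arc-transitive group of automorphisms generated by: (i) the deck transformations $(v,x)\mapsto(v,x+c)$ for $c\in\mathbb{Z}_n$, which act regularly on every fibre; (ii) a lift of the antipodal involution of $Q_3$, which always exists and swaps the two bipartite halves of $\G$; and (iii) a lift of the order-three coordinate-permutation symmetry of $Q_3$, twisted by the fibre automorphism $x\mapsto -(k+1)\,x$. The twisted lift (iii) exists precisely because $n\mid k^2+k+1$: this condition makes $-(k+1)$ an element of order $3$ in $(\mathbb{Z}/n\mathbb{Z})^*$, and multiplication by $-(k+1)$ cyclically permutes the voltage sums $\{1,\,-(k+1),\,-k\}$ attached to the three ``axial'' classes of $4$-cycles of $Q_3$, so that the required coboundary condition in $C^1(Q_3;\mathbb{Z}_n)$ is satisfied. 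Together with the deck action, these automorphisms act transitively on arcs of $\G$; alternatively one can invoke the arc-transitivity result of Feng, Kwak and Wang~\cite{FK,FKW}, whose construction the proposition generalises.
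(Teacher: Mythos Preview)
Your Fourier/character decomposition over $\mathbb{Z}_n$ is a sound and natural strategy, and it is genuinely different from the paper's argument. The identification of $N_0$ with the biadjacency matrix of $Q_3$ and the resulting contribution of multiplicity three to each of $\pm 1$ is correct. However, the heart of the multiplicity claim is the assertion that $\det(N_jN_j^*-I)\neq 0$ for all $j\neq 0$, and this you do not actually carry out: you describe it as ``routine in principle but the bookkeeping is substantial'' and leave it there. As stated, this is a gap --- the hypothesis $n\mid k^2+k+1$ must enter somewhere in that determinant computation, and you have not shown how.

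The paper avoids this character-by-character verification entirely. Instead of decomposing, it works directly with the $4n\times 4n$ block matrix and exhibits an explicit factorisation
\[
NN^\top - I \;=\; \tfrac12\, M\begin{bmatrix}I&0&0&0\\0&I&0&0\\0&0&0&-2I\\0&0&-2I&0\end{bmatrix}M^\top,
\qquad
M=\begin{bmatrix}2I&0&0&0\\ I+C & I-C & I-C & 0\\ I+C^{k+1} & I-C^{k+1} & 0 & 0\\ I+C^k & I-C^k & 0 & I-C^k\end{bmatrix}.
\]
Since the middle factor is invertible, $\mathrm{null}(NN^\top-I)=\mathrm{null}\,M=\mathrm{null}(I-C)+\mathrm{null}(I-C^k)+\mathrm{null}(I-C^{k+1})=3$, using that $\gcd(k,n)=\gcd(k+1,n)=1$. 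This is where the divisibility hypothesis is actually consumed. Incidentally, specialising this factorisation at $C\mapsto\alpha=\omega^j$ gives $\det M_j=-2(1-\alpha)(1-\alpha^k)(1-\alpha^{k+1})$, which is exactly the non-vanishing statement your approach needs; so the paper's identity, read one character at a time, would close your gap.

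For arc-transitivity both you and the paper ultimately defer to the lifting argument of Feng and Kwak; your sketch of the twisted order-three lift is more explicit than what the paper writes, and is consistent with their construction.
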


\begin{proof}
The arc-transitivity was essentially shown by Feng and Kwak \cite{FK} by using the concept of voltage graphs. The idea
is that the arc-transitivity of the cube can be ``lifted'' to ``transitivity of the nonzero blocks in the matrix $N$'',
which can be combined with using the cyclic group within the blocks. Note that here it is important that both $k$ and
$k+1$ have no common divisors with $n$, that is, $C^k$ and $C^{k+1}$ also represent cyclic permutations of order $n$.

For the multiplicity result, we note that it is not hard to show (see \cite{Hthesis}) that $\G$ has both
eigenvalues $\pm 1$ with multiplicity three if and only if $NN^{\top}$ has eigenvalue $1$ with multiplicity three. We thus
would like to know the nullity of the matrix
$$NN^{\top}-I=\begin{bmatrix}2I & I+C^{-1} & I+C^{-k-1} & I+C^{-k}\\I+C & 2I & I+C^{-k} & C+C^{-k} \\I+C^{k+1} & I+C^{k} & 2I & I+C \\I+C^{k} & C^{-1}+C^{k} & I+C^{-1} & 2I\end{bmatrix}.$$
Using elimination and decomposition, we found that
$$NN^{\top}-I=\frac12 M\begin{bmatrix}I & 0 & 0 & 0\\0 & I & 0 & 0 \\0 & 0 & 0 & -2I \\0 & 0 & -2I & 0\end{bmatrix}M^{\top},$$

where $$M=
\begin{bmatrix}2I & 0 & 0 & 0\\I+C & I-C & I-C & 0 \\I+C^{k+1} & I-C^{k+1} & 0 & 0 \\ I+C^k & I-C^k & 0 & I-C^k\end{bmatrix},$$
which indeed implies that $\nul (NN^{\top}-I)= \nul M =\nul (I-C) +\nul (I-C^k)+ \nul (I-C^{k+1})=3$.  \end{proof}

The cases $(n,k)=(1,0)$ and $(n,k)=(3,1)$ give rise to the cube and the Nauru graph, respectively. By Theorem~\ref{thm: classification}, all other examples give rise to non-symmetric schemes, and hence to non-commutative schemes. Indeed, if the scheme were commutative and non-symmetric, then we could consider its symmetrized scheme. Thus, we may conclude that there exists an infinite family of non-commutative association schemes with a connected and symmetric cubic relation having an eigenvalue with multiplicity three.\\

\noindent {\bf Acknowledgements.}
The authors thank Marc C\'amara for his contribution in the early start of this project and doing some supporting computations. Jack H. Koolen is partially supported by the National Natural Science Foundation of China (no. 11471009 and 11671376).

\end{document}